\newtheorem{theorem}{Theorem}[section]
\newtheorem{lemma}[theorem]{Lemma}
\newtheorem{proposition}[theorem]{Proposition}
\newtheorem{corollary}[theorem]{Corollary}
\newtheorem{example}[theorem]{Example}
\theoremstyle{definition}
\theoremstyle{definitions}
\newtheorem{definition}[theorem]{Definition}
\newtheorem{remark}[theorem]{Remark}
\theoremstyle{notations}
\theoremstyle{remarks}
\journal{ }
\begin{document}

\begin{frontmatter}



\title{Small Loop Transfer Spaces with Respect to Subgroups of Fundamental Groups}


\author[]{S.Z. Pashaei}
\ead{Pashaei.seyyedzeynal@stu.um.ac.ir}
\author[]{B. Mashayekhy\corref{cor1}}
\ead{bmashf@um.ac.ir }
\author[]{H. Torabi}
\ead{h.torabi@ferdowsi.um.ac.ir }
\author[]{M. Abdullahi Rashid}
\ead{mbinev@stu.um.ac.ir }
\address{Department of Pure Mathematics, Center of Excellence in Analysis on Algebraic Structures, Ferdowsi University of Mashhad,\\
P.O.Box 1159-91775, Mashhad, Iran.}
\cortext[cor1]{Corresponding author}
\begin{abstract}
 Let $H$ be a subgroup of $\pi_{1}(X,x_{0})$. In this paper, we extend the concept of $X$ being SLT space to $H$-SLT space at $x_0$. First, we show that the fibers of the endpoint projection $p_{H}:\tilde{X}_{H}\rightarrow X$ are topological group when $X$ is $H$-SLT space at $x_0$ and $H$ is a normal subgroup. Also, we show that under these conditions the concepts of homotopically path Hausdorff relative to $H$ and homotopically Hausdorff relative to $H$ coincide. Moreover, among other things, we show that the endpoint projection map $p_{H}$ has the unique path lifting property if and only if $H$ is a closed normal subgroup of  $\pi_{1}^{qtop}(X,x_{0})$ when $X$ is SLT at $x_{0}$. Second, we present conditions under which the whisker topology is agree with the quotient of compact-open topology on $\tilde{X}_{H}$. Also, we study the relationship between open subsets of $\pi_{1}^{wh}(X,x_{0})$ and $\pi_{1}^{qtop}(X,x_{0})$.

\end{abstract}

\begin{keyword}
Small loop transfer space\sep quasitopological fundamental group\sep whisker topology\sep homotopically Hausdorff \sep homotopically path Hausdorff\sep covering map\sep semicovering.
\MSC[2010]{57M10, 57M12, 57M05, 55Q05}

\end{keyword}

\end{frontmatter}



\section{Introduction and Preliminalies}

Throughout this article, we consider a path connected topological space X with a base point  $x_{0}\in{X}$. Let $H$ be a subgroup of $\pi_{1}(X,x_{0})$.
We recall the set $\tilde{X}_{H}$ as follows. Consider the equivalence relation  $\sim_{H}$ on the set of all paths in $X$ starting at $x_{0}$, $P(X,x_{0})$, by $\alpha \sim_{H} \beta$ if $\alpha(1)=\beta(1)$ and $[\alpha\ast\beta^{-1}]\in{H}$. Then $\tilde{X}_{H}$ is the set of all equivalence classes of based paths such as $\alpha$ in $X$ under the relation $\sim_{H}$ which is denoted by $[\alpha]_{H}$. The points of $\tilde{X}_{H}$ are said to be $H$-equivalent classes.

  A classical way to introduce a topology on $\tilde{X}_{H}$ is as the quotient space of the space $P(X,x_{0})$ equipped with the compact-open topology. We denote the space $\tilde{X}_{H}$ equipped with this topology by $\tilde{X}^{top}_{H}$. Another famous classical topology on $\tilde{X}_{H}$ which is introduced by Spanier \cite[proof of Theorem 13, Section 5, Chapter 2]{Spanier}  and named by Brodskiy et al.  \cite{Bcovering} the whisker topology,  is as follows (see also \cite[Section 2.1]{Bog}  and \cite[Section 2]{Zastrow}).

 \begin{definition}
For any pointed topological space $(X, x_{0})$ the whisker topology on the set $\tilde{X}_{H}$ is defined by the basis $([\alpha]_{H}, U) = \lbrace[\alpha\ast\beta]_{H}\rbrace$, where $\alpha$ is a path in $X$ from $x_{0}$ to $x_{1}$, $U$ is a neighborhood of $x_{1}$ in X, and $\beta$ is a path in $U$ originating at $x_{1}$.
 \end{definition}

 In this topology a neighborhood of a $H$-equivalent class of a path consists of all paths obtained by prolonging the original path by a small amount.  We denote the space $\tilde{X}_{H}$ equipped with the whisker topology by $\tilde{X}^{wh}_{H}$. The endpoint projection $p_{H}:\tilde{X}^{wh}_{H}\rightarrow X$ assigns to each $H$-equivalence class the end point of its path. We denote $\tilde{X}^{top}_{e}$, $\tilde{X}^{wh}_{e}$ (which is called \textit{universal path space }\cite[Definition 2]{ZVirk}) and $p_{e}$ instead of $\tilde{X}^{top}_{H}$, $\tilde{X}^{wh}_{H}$ and $p_{H}$, respectively, when $H$ is the trivial subgroup. Note that $\pi_{1}^{qtop}(X,x_{0})$ and $\pi_{1}^{wh}(X,x_{0})$ can be considered as the subspaces of $\tilde{X}^{top}_{e}$ and $\tilde{X}^{wh}_{e}$, respectively. Also, the fiber $p^{-1}_{e}(x_{0})\subset \tilde{X}^{wh}_{e}$ can be identified with the fundamental group $\pi_{1}^{wh}(X,x_{0})$. In \cite[Proposition 4.20]{BroU} it is shown that  $p^{-1}_{e}(x_{0})$ is a topological group under some conditions. In this paper, we address to more general case of this result. Indeed, we address the question of whether the induced topologies on $p_{H}^{-1}(x_{0})$ from $\tilde{X}^{wh}_{H}$ and $\tilde{X}^{top}_{H}$ make it a topological group. In the case $H=1$, it fails to be a topological group (see \cite{BroU}). Note that $p^{-1}_{H}(x_{0})\subset \tilde{X}_{H}$ forms a group as long as $H$ is a normal subgroup.

 The concept of small loop transfer space which have been introduced and studied by Brodskiy et al. \cite{BroU} is defined as follows:

\begin{definition}
A topological space $X$ is called a small loop transfer (SLT for short) space at $x_{0}$ if for every path $ \alpha $ in $X$ with $\alpha(0)=x_{0}$  and for every neighborhood $U$ of $x_{0}$ there is a neighborhood $V$ of $\alpha(1)=x$ such that for every loop $\beta$ in $V$ based at $x$ there is a loop $\gamma$ in $U$ based at $x_{0}$  which is homotopic to $ \alpha\ast\beta\ast\alpha^{-1}$ relative to $\dot{I}$. The space $X$ is called an SLT space if $X$ is SLT at $x_{0}$ for every $x_{0 }\in{X}$.
\end{definition}

We are going to extend the approach of Section 4 in \cite{BroU} in order to define relative version of small loop transfer spaces with respect to subgroups of the fundamental group.  First, we define the concept $H$-small loop transfer space at $x_{0}$ and then $H$-small loop transfer space ($H$-SLT for abbreviation), where $H$ is a subgroup of $\pi_{1}(X,x_{0})$. We show that for an $H$-SLT space at $x_{0}$ the fiber $(p^{-1}_{H}(x_{0}))^{wh}$, which is agree with $ \frac{\pi_{1}^{wh}(X,x_{0})}{H} $, is a topological group and also $(p^{-1}_{H}(x_{0}))^{top}$ is a topological group when $X$ is locally path connected. As a result of this statement, $\pi_{1}^{wh}(X,x_{0})$ is a topological group if $X$ is small loop transfer at $x_{0}$.

The classical theory of covering spaces is based on construction of the universal covering space for
a space $X$ as a space of homotopy classes of certain paths. In the case of path connected, locally path
connected, semilocally simply connected spaces there is an obvious choice of topology on such space, which
makes it the universal covering space (see \cite{Spanier}). Many people have attempted to extend the covering-theoretic approach to more general spaces (see e.g. \cite{Beres, BrazS, Baction, Fox, Lubkin}). For instance, Fox's overlays \cite{Fox} provide no more information about the subgroup lattice of $\pi_{1}(X,x_{0})$ than traditional covering maps but admit a much more general classification in terms of the fundamental pro-group. Then, Brazas \cite[Definition 3.1]{BrazS} introduces the notion of semicovering map as a local homeomorphism with continuous lifting of paths and homotopies and $\textbf{lpc}_{0}$-covering maps (see \cite[Definition 5.3]{BrazG}) which are defined in terms of unique lifting properties. With the exception of locall triviality, semicoverings enjoy nearly all of the important properties of coverings. $\textbf{lpc}_{0}$-covering maps often exist when standard covering maps do not and provide combinatorial information about fundamental groups of spaces which are not semilocally simply connected. Recall that generalized universal covering maps, which are certain case of $\textbf{lpc}_{0}$-covering maps, for the first time is introduced by Fischer and Zastrow \cite{Zastrow}. For example, one-dimensional spaces such as the \textit{Hawaiian earring} (denoted by HE) admit the generalized universal covering.

The endpoint projection map $p_{H}$ is $\textbf{lpc}_{0}$-covering map if and only if it has the unique path lifting property \cite[Lemma 5.9]{BrazG}. Moreover, Brazas in \cite[Lemma 5.10]{BrazG} has verified that any $\textbf{lpc}_{0}$-covering map $p:\tilde{X}\rightarrow X$ is equivalent to the endpoint projection map $p_{H}$, where $H=p_{\ast}\pi_{1}(\tilde{X},\tilde{x}_{0})$ for $\tilde{x}_{0}\in{p^{-1}(x_{0})}$.
Hence, the map $p_{H}: \tilde{X}^{wh}_{H}\rightarrow X$ is a good candidate for study any $\textbf{lpc}_{0}$-covering subgroup $H$ of $\pi_{1}(X,x_{0})$.

 The concepts of \textsf{homotopically Hausdorff relative to $H$} and \textsf{homotopically path Hausdorff relative to $H$} are related to $p_{H}$ which are appeared in \cite{Zastrow, BrazFa}.
 \begin{proposition}
 Let $(X, x_{0})$ be a path connected space.
 \begin{itemize}


 \item[(i)] If the endpoint projection $p_{H}$ has the unique path lifting property then $X$ is homotopically Hausdorff relative to $H$.

\item[(ii)] If $X$ is homotopically path Hausdorff relative to $H$ then the endpoint projection $p_{H}$ has the unique path lifting property.
 \end{itemize}
 \end{proposition}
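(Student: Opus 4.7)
The plan is to prove both implications by contraposition.

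For (i), I would fix witnesses to the negation of homotopically Hausdorff relative to $H$: a path $\alpha$ from $x_0$ to a point $x$ and a loop $\beta$ at $x$ with $[\alpha\ast\beta]_H\neq[\alpha]_H$, such that every open neighborhood $U$ of $x$ admits a loop $\gamma_U$ in $U$ based at $x$ satisfying $[\alpha\ast\gamma_U]_H=[\alpha\ast\beta]_H$. Translating into the whisker topology, this says exactly that the distinct point $[\alpha\ast\beta]_H$ lies in every basic neighborhood $([\alpha]_H,U)$ of $[\alpha]_H$. I would then define $\tilde{f}\colon[0,1]\to\tilde{X}^{wh}_H$ by $\tilde{f}(0)=[\alpha]_H$ and $\tilde{f}(t)=[\alpha\ast\beta]_H$ for $t>0$. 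Continuity at $t>0$ is immediate since a neighborhood of $t$ avoiding $0$ is mapped to a single point; continuity at $t=0$ is precisely the inclusion just observed. Because $p_H\circ\tilde{f}$ is the constant path at $x$, the map $\tilde{f}$ and the constant lift at $[\alpha]_H$ are two distinct lifts of $c_x$ sharing the same initial point, contradicting unique path lifting for $p_H$.

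For (ii), I would suppose $p_H$ lacks unique path lifting and fix a path $f\colon I\to X$ admitting two distinct lifts agreeing at $0$. Prepending the canonical lift of a path from $x_0$ to $f(0)$ reduces to the case where the common starting point is $[c_{x_0}]_H$, so $f(0)=x_0$; the canonical lift $\tilde{f}_c(s)=[f|_{[0,s]}]_H$ must then differ from another lift $\tilde{f}$ at some $t\in I$. Choosing a representative $\delta$ of $\tilde{f}(t)$, the paths $f|_{[0,t]}$ and $\delta$ both go from $x_0$ to $f(t)$ with $[\delta]_H\neq[f|_{[0,t]}]_H$. Applying the homotopically path Hausdorff hypothesis to this pair produces a partition $0=t_0<t_1<\cdots<t_n=t$ and open sets $U_k\supset f([t_{k-1},t_k])$ having the rigidity property that any path $\eta$ from $x_0$ to $f(t)$ whose restriction to each $[t_{k-1},t_k]$ lies in $U_k$ satisfies $[\eta]_H\neq[\delta]_H$.

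I would then combine whisker-continuity of $\tilde{f}$ with compactness of $[0,t]$ to refine the partition to $0=s_0<s_1<\cdots<s_m=t$ containing every $t_k$, such that for each $j$ the image $\tilde{f}([s_{j-1},s_j])$ lies in a basic neighborhood $([\delta^{s_{j-1}}]_H,W_j)$ with $W_j\subset U_{k(j)}$, where $\delta^s$ denotes a representative of $\tilde{f}(s)$ and $k(j)$ is the index with $[s_{j-1},s_j]\subset[t_{k(j)-1},t_{k(j)}]$. This forces $\tilde{f}(s_j)=[\delta^{s_{j-1}}\ast\mu_j]_H$ for some path $\mu_j$ in $W_j$ from $f(s_{j-1})$ to $f(s_j)$; telescoping yields $\delta\sim_H\mu_1\ast\mu_2\ast\cdots\ast\mu_m$. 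After reparametrization the concatenation $\mu_1\ast\cdots\ast\mu_m$ satisfies the piecewise confinement in the rigidity condition, so $[\mu_1\ast\cdots\ast\mu_m]_H\neq[\delta]_H$, contradicting the equivalence just derived. The principal obstacle is carrying out this double refinement cleanly, so that the concatenation respects simultaneously the partition supplied by homotopically path Hausdorff and the whisker-continuity data of $\tilde{f}$.
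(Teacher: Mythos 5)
The paper itself gives no proof of this proposition --- it is quoted from the literature (part (i) is \cite[Proposition 6.4]{Zastrow}, part (ii) is from \cite{BrazFa}) --- so there is no in-paper argument to compare against; your proof is correct and is essentially the standard one from those sources. Part (i) is clean: the negation of homotopically Hausdorff relative to $H$ says exactly that some $[\alpha\ast\beta]_H\neq[\alpha]_H$ lies in every basic whisker neighborhood $([\alpha]_H,U)$, and your two-valued map is then a continuous lift of the constant path distinct from the constant lift. In part (ii) the telescoping works because $\sim_H$ is preserved under right concatenation (so $[\delta^{s_{j-1}}\ast\mu_j]_H$ is independent of the chosen representative $\delta^{s_{j-1}}$), and the double refinement is legitimate via a Lebesgue number for the cover by whisker-continuity intervals. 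One imprecision to fix: when you restate the rigidity property coming from homotopically path Hausdorff relative to $H$, you drop the requirement $\gamma(t_i)=\alpha(t_i)$ at the partition points; the definition only excludes paths that are confined to the $U_k$ \emph{and} agree with $f$ at each $t_i$, not every confined path from $x_0$ to $f(t)$. Your concatenation $\mu_1\ast\cdots\ast\mu_m$ does satisfy this extra condition --- each $\mu_j$ ends at $f(s_j)$ and the refined partition contains every $t_k$ --- so the contradiction stands, but the condition must be carried along explicitly for the appeal to the definition to be valid.
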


In the case $H=1$, Virk and Zastrow gave an example of $X$ in which $X$ is homotopically Hausdorff but it does not admit the generalized universal covering space \cite{VZexam}. Therefore these two concepts are not equivalent, in general. In Section 2, we show that if $X$ is $H$-SLT at $x_{0}$, then these two concepts are equivalent, when $H$ is a normal subgroup. Indeed, we find a condition under which we can answer to the converse of parts (i) and (ii) of the above proposition.

 It is known that fibers of semicovering maps are discrete \cite{BrazO}. Note that this property does not hold for a $\textbf{lpc}_{0}$-covering map and even more, its fibers are not homeomorphic \cite[Example 4.15]{Zastrow}.  In Corollary 2.14, we show that fibers of a $\textbf{lpc}_{0}$-covering map are homeomorphic in SLT spaces. Finally in  Section 2, Corollary 2.16 shows that a semicovering map can transmit the property of being SLT from its codomain to its domain.


The last section can be viewed as generalization of results announced by Brodskiy et al. \cite{BroU}. They investigated relationship between $\tilde{X}^{wh}_{e}$ and $\tilde{X}^{top}_{e}$ and showed that these two spaces are identical when $X$ is an SLT space. It seems interesting to investigate relationship between $\tilde{X}^{wh}_{H}$ and $\tilde{X}^{top}_{H}$. We show that the whisker topology and the compact-open topology on $\tilde{X}_{H}$ are identical when the space $X$ is $H$-SLT (see Theorem 3.1).

Recall that  Virk and Zastrow in \cite{VZcom} studied $\pi_{1}^{wh}(X,x_{0})$ and $\pi_{1}^{qtop}(X,x_{0})$ and stated that the whisker topology is in general strictly finer than the compact-open topology on $\pi_{1}(X,x_{0})$. Brodskiy et al. \cite{BroU} verified that all open subsets of the whisker topology and the compact-open topology are the same in SLT spaces. On the other hand, Brazas \cite{BrazT} showed that $\pi_{1}^{\tau}(X,x_{0})$, which is a quotient space of the free (Markov) topological group of $\pi_{1}^{qtop}(X,x_{0})$, and $\pi_{1}^{qtop}(X,x_{0})$ have the same open subgroups while this property does not hold for $\pi_{1}^{qtop}(X,x_{0})$ and $\pi_{1}^{wh}(X,x_{0})$ since by unifying their open subgroups one can imply that those are coincidence. In Section 3, we show that all open subsets of $\pi_{1}^{wh}(X,x_{0})$ and $\pi_{1}^{qtop}(X,x_{0})$  containing a normal subgroup $H$ are the same when $X$ is $H$-SLT at $x_{0}$. Also, in the case that $H$ is not normal, open subgroups of $\pi_{1}^{wh}(X,x_{0})$ and $\pi_{1}^{qtop}(X,x_{0})$ containig $H$ are the same. Moreover, we verify that fibers of a $\textbf{lpc}_{0}$-covering map are Hausdorff. It is known that semicovering maps are $\textbf{lpc}_{0}$-covering maps but not vice versa. In Corollary 3.9, we prove that the converse holds if a fiber of the $\textbf{lpc}_{0}$-covering map is finite when the domain is an $H$-SLT space. Finally, we give an example of a space $X$ which is $H$-SLT but it is not an SLT space for a special subgroup $H$.




\section{Relationship Between SLT Spaces and the Endpoint Projection Map}

For any pointed topological space $(X, x_{0})$, let $\tilde{X}^{top}_{H}$ be the quotient space of the path space $P(X,x_{0})$ equipped with the compact-open topology. We denote by $(p_{H}^{-1}(x_{0}))^{top}$ which its topology will be induced from $\tilde{X}_{H}^{top}$. We denote $\tilde{X}_{H}$ with the whisker topology by  $\tilde{X}_{H}^{wh}$. The subspace topology which is induced on $p_{H}^{-1}(x_{0})$ from $\tilde{X}_{H}^{wh}$  is denoted by $(p_{H}^{-1}(x_{0}))^{wh}$.

It can be defined easily an action on the fiber $ p^{-1}_H(x_0) $ as follows provided that $H$ is a normal subgroup of $\pi_{1}(X,x_{0})$:
$\theta: p^{-1}_H(x_0) \times p^{-1}_H(x_0)\rightarrow p^{-1}_H(x_0)$, where $ \theta([\alpha] _{H}, [\beta]_{H})=[\alpha\ast\beta]_{H} $. Indeed, $\theta$ is well-defined when $H$ is a normal subgroup of $\pi_{1}(X,x_{0})$. Therefore,  action $\theta$ makes $ p^{-1}_H(x_0) $ be equipped with a group structure.  Note that property of being normal of $H$ is a necessary and sufficient condition to $ (p^{-1}_{H}(x_{0}))$   turn out to be a group. In relation to $p_{e}^{-1}(x_{0})$ , it is a group because the trivial subgroup $H=1$ is a normal subgroup. Recall that $\pi_{1}(X,x_{0})=p_{e}^{-1}(x_{0})$.


In \cite[Proposition 4.20]{BroU}, it is shown that if the operation of taking inverse in $ \pi _{1}^{wh}(X,x_0)$
$=(p_{e}^{-1}(x_{0}))^{wh} $ is continuous, then $ {{\pi }_1}^{wh}(X,x_0) $ is a topological group. Similarly, we have the following proposition for $ (p^{-1}_H(x_0))^{wh} $.


\begin{proposition}
Let $ H $ be a normal subgroup of $ \pi_{1}(X, x_{0}) $. If the operation of taking inverse in $ (p^{-1}_{H}(x_{0}))^{wh} $ is continuous, then $ (p^{-1}_{H}(x_{0}))^{wh} $ is a topological group.
\end{proposition}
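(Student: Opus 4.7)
The plan is to follow the familiar topological-group paradigm: continuity of inversion plus continuity of multiplication at the identity, together with continuity of translations, will be combined to yield joint continuity of the product map $\theta$. Let $e_H$ denote the class of the constant loop at $x_{0}$, which serves as the group identity (this is well-defined precisely because $H$ is normal). I write the operation as $a\cdot b=\theta(a,b)$ and adopt the usual notation $L_{g}$ and $R_{g}$ for left and right translation by $g\in p_H^{-1}(x_{0})$.

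First, I would verify that every left translation $L_{[\gamma]_{H}}$ is continuous directly from the definition of the whisker basis: a basic whisker neighborhood of $L_{[\gamma]_{H}}([\alpha]_{H})=[\gamma\ast\alpha]_{H}$ has the form $([\gamma\ast\alpha]_{H},U)\cap p_H^{-1}(x_{0})$ with $U$ a neighborhood of $x_{0}$, and $L_{[\gamma]_{H}}$ carries $([\alpha]_{H},U)\cap p_H^{-1}(x_{0})$ into it because prolonging $\alpha$ by a loop $\beta$ in $U$ gives $[\gamma\ast\alpha\ast\beta]_{H}$. Since $L_{[\gamma]_H}^{-1}=L_{[\gamma]_H^{-1}}$, left translations are homeomorphisms. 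Next, using the assumed continuity of inversion, the formula $R_{[\gamma]_{H}}=\mathrm{inv}\circ L_{[\gamma]_{H}^{-1}}\circ\mathrm{inv}$ shows that right translations are continuous, hence (by the same argument) homeomorphisms.

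Second, I would check that $\theta$ is continuous at $(e_{H},e_{H})$. A basic whisker neighborhood of $e_{H}$ inside $(p_H^{-1}(x_{0}))^{wh}$ is $\{[\beta]_{H}\mid \beta \text{ a loop in }U\text{ based at }x_{0}\}$ for a neighborhood $U$ of $x_{0}$. Taking both factors to be this same set, the concatenation $\beta_{1}\ast\beta_{2}$ of two loops in $U$ at $x_{0}$ is itself a loop in $U$ at $x_{0}$, so $[\beta_{1}\ast\beta_{2}]_{H}$ lies in the prescribed neighborhood of $e_{H}$. No SLT-type hypothesis is needed here because the product of two short loops is still short.

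Third, I would promote continuity at the identity to joint continuity at an arbitrary pair $(a_{0},b_{0})$ via the identity
\[
\theta(a,b)=L_{a_{0}}\!\Big(R_{b_{0}}\!\big(\theta(L_{a_{0}^{-1}}(a),R_{b_{0}^{-1}}(b))\big)\Big),
\]
valid because $a_{0}(a_{0}^{-1}a)(bb_{0}^{-1})b_{0}=ab$. The map $(a,b)\mapsto(L_{a_{0}^{-1}}(a),R_{b_{0}^{-1}}(b))$ is a homeomorphism sending $(a_{0},b_{0})$ to $(e_{H},e_{H})$, and $L_{a_{0}}\circ R_{b_{0}}$ is a homeomorphism sending $e_{H}$ to $a_{0}b_{0}$, so $\theta$ is a conjugate of itself by homeomorphisms that match identities to $(a_{0},b_{0})$ and $a_{0}b_{0}$. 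Combined with Step~2 this yields continuity of $\theta$ everywhere.

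I do not anticipate a serious obstacle; the argument is a specialization of the standard topological-group trick. The one place requiring mild care is Step~2, where I must use a single neighborhood $U$ for both factors so that concatenation remains constrained to $U$; and one must keep in mind throughout that normality of $H$ is what makes $\theta$, the inverse, and the translations well-defined maps on $p_H^{-1}(x_{0})$ in the first place.
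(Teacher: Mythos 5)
Your argument is correct, and all the individual claims check out: left translation is indeed automatically continuous in the whisker topology (a basic neighborhood appends loops on the \emph{right}, so prefixing by $[\gamma]_H$ commutes with the basis), right translation genuinely needs the inversion hypothesis (the formula $R_{g}=\mathrm{inv}\circ L_{g^{-1}}\circ\mathrm{inv}$ is valid), continuity of $\theta$ at $(e_H,e_H)$ is immediate since a concatenation of two loops in $U$ is a loop in $U$, and the conjugation identity $\theta(a,b)=L_{a_{0}}(R_{b_{0}}(\theta(L_{a_{0}^{-1}}(a),R_{b_{0}^{-1}}(b))))$ correctly transports continuity at the identity to an arbitrary point. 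The paper instead verifies continuity of $\theta$ at a general pair $([\alpha]_H,[\beta]_H)$ in a single direct computation: it applies continuity of inversion at the point $[\beta^{-1}]_H$ to produce, for a given $U$, a neighborhood $V$ of $x_0$ with $[\gamma\ast\beta]_H=[\beta\ast\mu]_H$ for every loop $\gamma$ in $V$ and some loop $\mu$ in $U$, and then uses normality of $H$ to rearrange $[\alpha\ast\gamma\ast\beta\ast\delta]_H$ into $([\alpha\ast\beta]_H,U)$. That one application of inversion-continuity is exactly the continuity of right translation by $[\beta]_H$ at the identity, so the two proofs rest on the same key observation; yours packages it into the standard translation/homogeneity decomposition, which is more modular and makes transparent which ingredients are automatic from the whisker topology and which require the hypothesis, while the paper's version is shorter and self-contained but hides the structure inside one chain of coset manipulations. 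One cosmetic point: it is the group \emph{operation} on $p_H^{-1}(x_0)$, not the identity element $e_H=[c_{x_0}]_H$ itself, whose well-definedness requires normality of $H$.
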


\begin{proof}
It suffices to show that the concatenation operation
$$\psi:(p^{-1}_{H}(x_{0}))^{wh}\times (p^{-1}_{H}(x_{0}))^{wh}\rightarrow (p^{-1}_{H}(x_{0}))^{wh}$$
defined by $\psi([\alpha]_{H},[\beta]_{H})=[\alpha\ast\beta]_{H}$ is continuous. Let $\alpha$ and $\beta$ be two loops at $x_{0}$ and $([\alpha\ast\beta]_{H},U)$ be an open neighborhood of the concatenation $[\alpha\ast\beta]_{H}$ in $ (p^{-1}_{H}(x_{0}))^{wh} $. The continuity of taking inverse, applied to the element $[\beta^{-1}]_{H}\in{(p^{-1}_{H}(x_{0}))^{wh}}$, implies the existence of a neighborhood $V$ of $x_{0}$ such that for any loop $\gamma$ in $V$ the inverse of $[\beta^{-1}\ast\gamma^{-1}]_{H}$ belongs to $([\beta]_{H},U)$, i.e, $([\beta^{-1}\ast\gamma^{-1}]_{H})^{-1}=[\gamma\ast\beta]_{H}=[\beta\ast\mu]_{H}$ for some loop $\mu$ in $U$. We consider open subsets  $([\alpha]_{H},V)$ and  $([\beta]_{H},U)$ containing  $[\alpha]_{H}$ and  $[\beta]_{H}$, respectively. It is enough to show that $\psi(([\alpha]_{H},V),([\beta]_{H},U)) \subseteq ([\alpha\ast\beta]_{H},U)$. Note that for any $[\alpha\ast\gamma]_{H}\in{([\alpha]_{H},V)}$ and any $[\beta\ast\delta]_{H}\in{([\beta]_{H},U)}$ the equality  $[\alpha\ast\gamma\ast\beta\ast\delta]_{H}=[\alpha\ast\gamma\ast\beta\ast\mu^{-1}\ast\beta^{-1}\ast\beta\ast\mu\ast\delta]_{H}
$ holds for some loop $\mu$ in $U$. Since  $[\gamma\ast\beta\ast\mu^{-1}\ast\beta^{-1}]\in{H}$, we have $[\alpha^{-1}\ast\alpha\ast\gamma\ast\beta\ast\delta\ast\delta^{-1}\ast\mu^{-1}\ast\beta^{-1}\ast
\alpha^{-1}\ast\alpha]\in{H}$. Also, since $H\unlhd \pi_{1}(X,x_{0})$, we have
$[\alpha\ast\gamma\ast\beta\ast\delta\ast\delta^{-1}\ast\mu^{-1}\ast\beta^{-1}\ast
\alpha^{-1}]\in{H}$ which implies that
$[\alpha\ast\gamma\ast\beta\ast\delta]_{H}=[\alpha\ast\beta\ast\mu\ast\delta]_{H}$. Therefore $[\alpha\ast\gamma\ast\beta\ast\delta]_{H}\in{([\alpha\ast\beta]_{H},U)}$ and hence the concatenation operation is continuous.
\end{proof}


Small loop transfer spaces appeared for the first time in \cite{BroU}.  In this section, we introduce a relative version of this notion and show that this new notion will be used to give an answer to the converse of Proposition 1.3.

\begin{definition}
Let $H\leq \pi_{1}(X,x_{0})$, then we say that the topological space $X$ is an $H$-SLT space at $x_{0}$, if for every path $\alpha$ begining at $x_{0}$ and for every open subset $x_{0}\in{U}$  there is an open subset $\alpha(1)\in{V}$ such that for every loop $\beta$ in $V$ based at $\alpha(1)$, there is a loop $\gamma$ in $U$ based at $x_{0}$ such that  $[\alpha\ast\beta\ast\alpha^{-1}]_{H}=[\gamma]_{H}$.
\end{definition}

It is easy to see that every SLT at $x_{0}$ space is an $H$-SLT at $x_{0}$, where $H$ is the trivial subgroup. Note that the converse does not hold in general (see Example 3.10).

Using the technique of the proof of Proposition 2.1, we can prove the following corollary. In fact, the property of being $H$-SLT at $x_{0}$ will be used in the proof of the continuity of the inverse map.


\begin{corollary}
Let $ H $ be a normal subgroup of $ \pi_{1}(X,x_{0}) $ and $ X $ be an $H$-SLT at $x_{0}$. Then $ (p^{-1}_{H}(x_{0}))^{wh} $ is a topological group.
\end{corollary}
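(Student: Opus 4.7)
The plan is to invoke Proposition 2.1, which reduces the claim to proving that the inverse operation $\iota\colon (p_H^{-1}(x_0))^{wh}\to (p_H^{-1}(x_0))^{wh}$, $[\alpha]_H\mapsto [\alpha^{-1}]_H$, is continuous. The normality of $H$ ensures both that $\iota$ is well defined and that $p_H^{-1}(x_0)$ carries a group structure in the first place, and the $H$-SLT hypothesis is precisely what I will use to produce the required neighborhoods in the whisker topology.

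Fix $[\alpha]_H\in p_H^{-1}(x_0)$ and an arbitrary basic whisker neighborhood $([\alpha^{-1}]_H, U)$ of $[\alpha^{-1}]_H$, so that $U$ is an open neighborhood of $x_0$ in $X$. Since $\alpha$ is in particular a path starting at $x_0$ and $X$ is $H$-SLT at $x_0$, I would apply the $H$-SLT property to $\alpha$ and $U$ to obtain an open neighborhood $V$ of $\alpha(1)=x_0$ such that for every loop $\beta$ in $V$ based at $x_0$ there is a loop $\gamma$ in $U$ based at $x_0$ with $[\alpha\ast\beta\ast\alpha^{-1}]_H=[\gamma]_H$. My candidate neighborhood of $[\alpha]_H$ is then $([\alpha]_H, V)$, and the goal is to verify $\iota\bigl(([\alpha]_H, V)\bigr)\subseteq ([\alpha^{-1}]_H, U)$.

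A generic element of $([\alpha]_H, V)$ has the form $[\alpha\ast\beta]_H$ with $\beta$ a loop in $V$ at $x_0$, and its image under $\iota$ is $[\beta^{-1}\ast\alpha^{-1}]_H$. I would apply the $H$-SLT property not to $\beta$ but to $\beta^{-1}$ (which is also a loop in $V$ at $x_0$) to extract a loop $\gamma$ in $U$ at $x_0$ with $[\alpha\ast\beta^{-1}\ast\alpha^{-1}\ast\gamma^{-1}]\in H$. Conjugating this class by $[\alpha^{-1}]$ and using the normality of $H$ then yields $[\beta^{-1}\ast\alpha^{-1}\ast\gamma^{-1}\ast\alpha]\in H$, which is precisely the statement $[\beta^{-1}\ast\alpha^{-1}]_H=[\alpha^{-1}\ast\gamma]_H$. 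Since $\gamma$ is a loop in $U$ at $x_0$, this last class lies in $([\alpha^{-1}]_H, U)$, giving the desired inclusion.

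The main delicate point is the algebraic bookkeeping: applying $H$-SLT to $\beta^{-1}$ rather than $\beta$, and then conjugating by $[\alpha^{-1}]$ via normality, is what converts the loop produced by the $H$-SLT property into the exact form needed to identify $[\beta^{-1}\ast\alpha^{-1}]_H$ with a whisker prolongation of $[\alpha^{-1}]_H$ inside $U$. Everything else is essentially a mirror of the argument used in the proof of Proposition 2.1. Once continuity of $\iota$ is established at an arbitrary $[\alpha]_H$, Proposition 2.1 immediately delivers the topological group structure on $(p_H^{-1}(x_0))^{wh}$.
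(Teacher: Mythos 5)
Your proposal is correct and follows exactly the route the paper intends: the paper only remarks that Corollary 2.3 follows from Proposition 2.1 once the $H$-SLT property at $x_{0}$ is used to establish continuity of the inverse map, and your argument (applying $H$-SLT to $\beta^{-1}$ and conjugating by $[\alpha^{-1}]$ via normality to get $[\beta^{-1}\ast\alpha^{-1}]_{H}=[\alpha^{-1}\ast\gamma]_{H}$) supplies precisely the details the paper omits. No gaps.
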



Note that $p_{H}^{-1}(x_{0})$ is the set of right cosets of $H$ in $\pi_{1}(X,x_{0})$ (see \cite{Bcovering,H}). It is easy to check that $(p^{-1}_{H}(x_{0}))^{wh}$ and $(p^{-1}_{H}(x_{0}))^{top}$ are homeomorphic to the quotient spaces $ \frac{\pi^{wh}_{1}(X, x_{0})}{H} $ and $ \frac{\pi^{qtop}_{1}(X, x_{0})}{H}$, respectively. Therefore, as a consequence of Corollary 2.3, if $ H\unlhd\pi_{1}(X,x_{0}) $ and $ X $ is an $H$-SLT at $x_{0}$, then $ \frac{\pi^{wh}_{1}(X, x_{0})}{H} $ is a topological group. Moreover, this result hold for $ \frac{\pi^{qtop}_{1}(X, x_{0})}{H}$  if $X$ is locally path connected (see Corollary 3.2).  As a special case, if $X$ is an SLT space at $x_{0}$, then $\pi_{1}^{wh}(X,x_{0})$ is a topological group (see \cite[Theorem 4.12]{BroU}).

 Let $H\leq \pi_{1}(X,x_{0})$. Recall that $X$ is said to be homotopically Hausdorff relative to $ H $ if for every $x\in{X}$, for every $ \alpha \in P(X,x_0) $ with $\alpha(1)=x$, and for any $ g \notin H$, there is an open neighborhood $ U_g $ of $ \alpha(1) $ in which there is no loop $ \gamma:(I, \dot{I})\rightarrow (U_g,\alpha(1)) $  such that $ [ \alpha\ast\gamma\ast{\alpha}^{-1}]\in Hg (see $ \cite[p. 190]{Zastrow}). Note that $X$ is homotopically Hausdorff if and only if X is homotopically Hausdorff relative to the trivial subgroup $H = \lbrace 1\rbrace$. Also, $X$ is said to be homotopically path Hausdorff relative to $H$, if for every pair of paths $ \alpha, \beta \in P(X,x_0) $  with $ \alpha(1)=\beta(1) $  and $ [\alpha\ast{\beta}^{-1}] \notin H $, there is a partition $ 0=t_{0}<t_{1}<...<t_{n}=1 $ and a sequence of open subsets $ U_{1}, U_{2}, ..., U_{n} $ with  $ \alpha([t_{i-1},t_{i}])\subseteq U_{i} $ such that if $ \gamma:I \rightarrow X $  is another path satisfying $  \gamma([t_{i-1},t_{i}])\subseteq U_{i} $ for $1\leq i \leq n$ and $ \gamma(t_{i})=\alpha(t_{i})  $ for every $0\leq i\leq n $, then $ [\gamma \ast{\beta}^{-1}] \notin H (see $ \cite{BrazFa}).

Note that homotopically path Hausdorff relative to $H$ implies homotopically Hausdorff relative to $H$, but the converse does not hold in general \cite{Fischerexam, VZexam}. In the following theorem which is one of our main results, we show that the converse holds in SLT spaces relative to normal subgroups.


\begin{theorem}
Let $H\unlhd \pi_{1}(X,x_{0})$ and $X$ be an $H$-SLT at $x_{0}$. Then $X$ is homotopically path Hausdorff relative to $H$ if and only if $X$ is homotopically Hausdorff relative to $H$.
\end{theorem}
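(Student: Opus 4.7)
The forward direction is already observed in the paragraph preceding the theorem (homotopically path Hausdorff relative to $H$ implies homotopically Hausdorff relative to $H$), so the work is the converse. Assume $X$ is homotopically Hausdorff relative to $H$ and $H$-SLT at $x_{0}$, with $H \unlhd \pi_{1}(X,x_{0})$. Let $\alpha, \beta \in P(X,x_{0})$ with $\alpha(1)=\beta(1)$ and put $g := [\alpha \ast \beta^{-1}] \notin H$. Applying the homotopically Hausdorff property at $x_{0}$ to the constant path and the element $g$, I first obtain an open neighborhood $W$ of $x_{0}$ such that no loop $\eta$ in $W$ based at $x_{0}$ satisfies $[\eta] \in Hg$. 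This $W$ is the common ``forbidden region at the basepoint'' against which every candidate path will eventually be measured.

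I then propagate $W$ along $\alpha$ via $H$-SLT: for each $t \in [0,1]$, applying $H$-SLT at $x_{0}$ to the path $\alpha|_{[0,t]}$ and the neighborhood $W$ furnishes an open neighborhood $V_{t}$ of $\alpha(t)$ such that for every loop $\lambda$ in $V_{t}$ based at $\alpha(t)$ there is a loop $\eta$ in $W$ at $x_{0}$ with $[\alpha|_{[0,t]} \ast \lambda \ast \alpha|_{[0,t]}^{-1}]_{H} = [\eta]_{H}$. A Lebesgue-number argument applied to the open cover $\{\alpha^{-1}(V_{t})\}_{t \in [0,1]}$ of the compact interval $[0,1]$ then yields a partition $0 = t_{0} < t_{1} < \cdots < t_{n} = 1$ and open sets $U_{i}$ with $\alpha([t_{i-1},t_{i}]) \subseteq U_{i}$, such that every loop in $U_{i}$ based at $\alpha(t_{i-1})$ transfers, via conjugation by $\bar\alpha_{i-1} := \alpha|_{[0,t_{i-1}]}$, to some loop in $W$ based at $x_{0}$ modulo $H$.

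Given any path $\gamma$ with $\gamma([t_{i-1},t_{i}]) \subseteq U_{i}$ and $\gamma(t_{i}) = \alpha(t_{i})$ for all $i$, set $\alpha_{i} := \alpha|_{[t_{i-1},t_{i}]}$ and $\gamma_{i} := \gamma|_{[t_{i-1},t_{i}]}$; then each $\alpha_{i} \ast \gamma_{i}^{-1}$ is a loop at $\alpha(t_{i-1})$ contained in $U_{i}$. Iterated insertion of $\bar\alpha_{i-1}^{-1} \ast \bar\alpha_{i-1}$ into $\alpha \ast \gamma^{-1}$ gives the telescoping identity
\[
[\alpha \ast \gamma^{-1}] \;=\; [\bar\alpha_{n-1} \ast (\alpha_{n} \ast \gamma_{n}^{-1}) \ast \bar\alpha_{n-1}^{-1}] \cdots [\bar\alpha_{0} \ast (\alpha_{1} \ast \gamma_{1}^{-1}) \ast \bar\alpha_{0}^{-1}].
\]
By the partition step each factor equals $[\eta_{i}]_{H}$ for some loop $\eta_{i}$ in $W$ at $x_{0}$; normality of $H$ makes these coset equalities multiply in the quotient $\pi_{1}(X,x_{0})/H$, yielding $[\alpha \ast \gamma^{-1}] \in H \cdot [\eta_{n} \ast \cdots \ast \eta_{1}]$. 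Since $\eta_{n} \ast \cdots \ast \eta_{1}$ is itself a loop in $W$ at $x_{0}$, its class is not in $Hg$, hence $[\alpha \ast \gamma^{-1}] \notin Hg$, which by normality is equivalent to $[\gamma \ast \beta^{-1}] \notin H$, as required.

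The main technical obstacle is synchronizing the Lebesgue partition with the transfer base paths $\bar\alpha_{i-1}$: a raw Lebesgue argument only supplies $\alpha([t_{i-1},t_{i}]) \subseteq V_{s_{i}}$ for some $s_{i}$ that need not equal $t_{i-1}$, while the telescoping calls for the transfer to be performed via $\bar\alpha_{i-1}$. This is patched either by refining the partition so that every such $s_{i}$ becomes a breakpoint, or by observing that a loop in $V_{s_{i}}$ at $\alpha(t_{i-1})$ can still be transferred through $\bar\alpha_{i-1}$ after conjugation by the short segment $\alpha|_{[t_{i-1},s_{i}]}$ and a single application of $H$-SLT to the longer path $\alpha|_{[0,s_{i}]}$. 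Aside from this bookkeeping the proof is a formal telescoping combined with homotopically Hausdorff relative to $H$ at the basepoint and normality of $H$.
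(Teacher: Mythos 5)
Your argument is correct and follows essentially the same route as the paper's proof: fix the neighborhood $W$ of $x_{0}$ supplied by homotopical Hausdorffness relative to $H$, propagate it along $\alpha$ via the $H$-SLT property at $x_{0}$, and telescope using normality of $H$ to place $[\alpha\ast\gamma^{-1}]$ in $H[\eta]$ for a single loop $\eta$ in $W$. The only differences are cosmetic (the paper invokes homotopical Hausdorffness at the end rather than at the outset), and your explicit handling of the mismatch between the Lebesgue breakpoints $t_{i-1}$ and the points at which the $H$-SLT neighborhoods $V_{t}$ were actually chosen is a bookkeeping detail the paper's proof glosses over.
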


\begin{proof}
It suffices to show if $X$ is homotopically Hausdorff relative to $H$, then $X$ is homotopically path Hausdorff relative to $H$. Let $\alpha$ and $\beta$ be two paths in $X$ from $x_{0}$ to $x$  such that $[\beta\ast\alpha^{-1}]\notin{H}$ and $U$ be an open neighborhood of $x_{0}$ in $X$. For each $t\in{[0,1]}$, let $\alpha_{t}$ be the path from $x_{0}$ to $x_{t}=\alpha(t)$ determined by $\alpha$. By $H$-SLT property, for such $t\in{[0,1]}$, there is an open neighborhood $V_{t}$ of $x_{t}$ such that for any loop $\beta$ in $V_{t}$ at $x_{t}$ there is a loop $\lambda_{t}$ in $U$ at $x_{0}$ so that $[\alpha_{t}\ast\beta\ast\alpha^{-1}_{t}]_{H}=[\lambda_{t}]_{H}$.

Since $\alpha(I)$ is a compact subset of $X$, there is a partition $0=t_{0}\leq t_{1}\leq ...\leq t_{n}=1$ and   open neighborhoods $V_{1}, V_{2},..., V_{n}$ with $\alpha([t_{i-1},t_{i}])\subseteq V_{i} $. For simplicity, we put $\alpha|_{[t_{i-1},t_{i}]}=\delta_{i}$. Note that $\alpha_{t_{0}}=\delta_{0}=c_{x_{0}}$ and $\alpha_{t_{1}}=\delta_{1}$. Let $\gamma$ be another path such that $Im(\gamma_{i}=\gamma|_{[t_{i-1},t_{i}]})\subseteq V_{i} $ for $1\leq i\leq n$ and $\gamma(t_{i})=\alpha(t_{i})$ for every $0\leq i\leq n$. Using the loops $\lambda_{1}$, $\lambda_{2}$,...,$\lambda_{n}$ in $U$ introduced at the first paragraph of the proof we have
\begin{center}
 $[\alpha_{t_{i-1}}\ast\gamma_{i}\ast\delta^{-1}_{i}\ast\alpha^{-1}_{t_{i-1}}]_{H}=[\lambda_{i}]_{H}$,
\end{center}
or equivalently,
\begin{center}
$\theta_{i}=[\alpha_{t_{i-1}}\ast\gamma_{i}\ast\delta^{-1}_{i}\ast\alpha^{-1}_{t_{i-1}}]\in{H[\lambda_{i} ]}$,
\end{center}
for $1\leq i\leq n$.

Using normality of $H$, it is easy to show that
$ [\gamma\ast\alpha^{-1}]=\theta_{1}\theta_{2}...\theta_{n}\in H[\lambda_{1}\lambda_{2}...\lambda_{n}]$. If we put $\lambda=\lambda_{1}\lambda_{2}...\lambda_{n}$, then $[\gamma\ast\alpha^{-1}\ast\lambda^{-1}]\in{H}$.
On the other hand, since $X$ is homotopically Hausdorff relative to $H$, there is an open neighborhood $W$ of $x_{0}$ in $X$ such that for every $[\sigma]\in i_{\ast}\pi_{1}(W,x_{0})$, $[\sigma]\notin{H[\beta\ast\alpha^{-1}]}$ and hence  $[\sigma\ast\alpha\ast\beta^{-1}]\notin{H}$. If we consider $U=W$ and $\sigma=\lambda$, then we have $[\lambda\ast\alpha\ast\beta^{-1}]\notin{H}$. Since $[\gamma\ast\alpha^{-1}\ast\lambda^{-1}]\in{H}$, we have $[\gamma\ast\alpha^{-1}\ast\lambda^{-1}][\lambda\ast\alpha\ast\beta^{-1}]\notin{H}$. Thus  $[\gamma\ast\beta^{-1}]\notin{H}$ which implies that $X$ is homotopically path Hausdorff relative to $H$.
\end{proof}



The authors in \cite[Proposition 3.9]{Ab} showed that if $X$ is homotopically Hausdorff relative to $ H $, then $ H $ is closed in $ {\pi}^{wh}_{1}(X,x_{0}) $, but the converse does not hold. Consider \textit{Harmonic Archipelago} (denoted by HA) which is introduced by Virk \cite{Virk}. The space HA is semilocally simply connected at any point except the common point of the boundary circles. By \cite[Proposition 4.21]{BroU} $\pi_{1}^{wh}(X,x)$ is discrete if $X$ is semilocally simply connected at $x$. It turns out that the trivial subgroup of the fundamental group of HA is closed but the space HA is not homotopically Hausdorff because it has small loop (see \cite{Virk}).

In the following theorem, we give an  an answer to the converse of the above statement.


\begin{theorem}
Let $X$ be an $H$-SLT at $x_{0}$, where $H$ is a subgroup of $\pi_{1}(X,x_{0})$. Then $X$ is homotopically Hausdorff relative to $H$ if and only if $H$ is a closed subgroup of $\pi_{1}^{wh}(X,x_{0})$.
\end{theorem}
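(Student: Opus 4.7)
The plan is to handle the two implications separately. The forward direction---that $X$ being homotopically Hausdorff relative to $H$ forces $H$ to be closed in $\pi_{1}^{wh}(X,x_{0})$---does not require the $H$-SLT hypothesis and is precisely \cite[Proposition 3.9]{Ab}, which is cited immediately above the theorem. My plan therefore focuses on the converse: assuming $X$ is $H$-SLT at $x_{0}$ and $H$ is closed in $\pi_{1}^{wh}(X,x_{0})$, I would show that $X$ is homotopically Hausdorff relative to $H$. The argument proceeds by contrapositive.

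Suppose $X$ is not homotopically Hausdorff relative to $H$. Then there exist a path $\alpha$ from $x_{0}$ to some $x\in X$ and an element $g\in\pi_{1}(X,x_{0})\setminus H$ such that for every open neighborhood $V$ of $x$ there is a loop $\gamma_{V}$ in $V$ based at $x$ with $[\alpha\ast\gamma_{V}\ast\alpha^{-1}]\in Hg$. The goal is to use $H$-SLT to transport these ``witnesses'' from arbitrarily small neighborhoods of $x$ to arbitrarily small neighborhoods of $x_{0}$, producing elements of $H$ inside every basic whisker neighborhood of $g$ and thereby concluding that $g\in\overline{H}\setminus H$ in $\pi_{1}^{wh}(X,x_{0})$, contradicting closedness of $H$.

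Concretely, let $U$ be an arbitrary open neighborhood of $x_{0}$. By $H$-SLT at $x_{0}$ applied to the path $\alpha$ and the neighborhood $U$, there is an open neighborhood $V_{U}$ of $x$ such that every loop $\beta$ in $V_{U}$ based at $x$ admits a loop $\lambda\subseteq U$ at $x_{0}$ with $[\alpha\ast\beta\ast\alpha^{-1}]_{H}=[\lambda]_{H}$; equivalently $[\lambda]\in H\cdot[\alpha\ast\beta\ast\alpha^{-1}]$. Applying this with $\beta=\gamma_{V_{U}}$ yields a loop $\lambda_{U}\subseteq U$ at $x_{0}$ satisfying $[\lambda_{U}]\in H\cdot Hg=Hg$, so $[\lambda_{U}]=h_{U}\,g$ for some $h_{U}\in H$. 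Rewriting, $h_{U}^{-1}=g\cdot[\lambda_{U}]^{-1}=[g\ast\lambda_{U}^{-1}]$; since $\lambda_{U}^{-1}$ is again a loop in $U$ based at $x_{0}$, this element lies in the basic whisker neighborhood $([g],U)\cap\pi_{1}^{wh}(X,x_{0})=g\cdot i_{\ast}\pi_{1}(U,x_{0})$ of $g$. As $U$ ranges over all open neighborhoods of $x_{0}$, every basic whisker neighborhood of $g$ meets $H$, so $g\in\overline{H}$, contradicting closedness of $H$ and completing the proof.

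The step I expect to require the most care is the alignment between left and right cosets: the definition of ``homotopically Hausdorff relative to $H$'' is phrased with right cosets $Hg$, whereas basic whisker neighborhoods of $g\in\pi_{1}^{wh}(X,x_{0})$ are the left translates $g\cdot i_{\ast}\pi_{1}(U,x_{0})$. The passage $[\lambda_{U}]=h_{U}g\Rightarrow h_{U}^{-1}=g[\lambda_{U}]^{-1}$ is exactly the maneuver that reconciles the two. Notably, it uses only that $H$ is a subgroup (so $h_{U}^{-1}\in H$) and that $i_{\ast}\pi_{1}(U,x_{0})$ is inverse-closed, so no normality assumption on $H$ is needed---consistent with the weaker hypothesis of this theorem relative to Theorem 2.5.
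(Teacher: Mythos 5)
Your proposal is correct and is essentially the paper's own argument: both directions are handled the same way (the forward implication by citing \cite[Proposition 3.9]{Ab}, the converse by combining a basic whisker neighborhood $([g],U)$ witnessing closedness of $H$ with the $H$-SLT transfer of loops near $\alpha(1)$ into $U$). The only difference is that you phrase the converse as a contrapositive while the paper argues it directly, and your coset bookkeeping $[\lambda_U]=h_Ug\Rightarrow h_U^{-1}=g[\lambda_U]^{-1}\in([g],U)\cap H$ is the mirror image of the paper's computation $[h_1\ast\delta]=[h_2\ast\beta]\Rightarrow[\delta\ast\beta^{-1}]\in H$.
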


\begin{proof}
Let $\alpha\in{P(X,x_{0})}$ and $[\delta]\notin{H}$. Since $H$ is a closed subgroup of  $\pi_{1}^{wh}(X,x_{0})$, there is an open neighborhood $x_{0}\in{U}$ in $X$ such that $([\delta],U)\cap H=\emptyset$, i.e., for every loop $\beta$ in $U$ based at $x_{0}$ we have $[\delta\ast\beta]\notin{H}$. Since $X$ is $H$-SLT at $x_{0}$ so there is an open neighborhood $\alpha(1)\in{V}$ such that for every loop $\gamma$ in $V$ based at $\alpha(1)$ there is a loop $\beta$ in $U$ based at $x_{0}$  such that $[\alpha\ast\gamma\ast\alpha^{-1}]_{H}=[\beta]_{H}$, i.e., $[\alpha\ast\gamma\ast\alpha^{-1}]\in{H[\beta]}$. It suffices to show that for every loop $\gamma$ in $V$ based at $\alpha(1)$, we have $[\alpha\ast\gamma\ast\alpha^{-1}]\notin{H[\delta]}$ . Suppose
$[\alpha\ast\gamma\ast\alpha^{-1}]\in{[H\delta}]$ for some loop $\gamma$ in $U$. Then we have $[\alpha\ast\gamma\ast\alpha^{-1}]=[h_{1}\ast\delta]$ and  $[\alpha\ast\gamma\ast\alpha^{-1}]=[ h_{2}\ast\beta]$, where $h_{1}$ and $h_{2}$ are loops in X based at $x_{0}$ such that $[h_{1}]$ and $[h_{2}]$ belong to $H$. Consequently, we have $[ h_{1}\ast\delta]=[h_{2}\ast\beta]$, i.e., $[\delta\ast\beta^{-1}]\in{H}$ which is a contradiction.
\end{proof}

We can now determine characterization of $T_{0}$ axiom in $\pi_{1}^{wh}(X,x_{0})$ which follows directly from Theorem 2.5.
\begin{corollary}
Let $X$ be an SLT space. Then $X$ is homotopically Hausdorff if and only if $\pi_{1}^{wh}(X,x_{0})$ is $T_{0}$.
\end{corollary}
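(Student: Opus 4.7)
The plan is to deduce this directly from Theorem 2.5 by specializing to the trivial subgroup and then invoking the standard separation dichotomy for topological groups. Since $X$ is SLT at $x_0$, every path $\alpha$ and every neighborhood $U$ of $x_0$ yield a neighborhood $V$ of $\alpha(1)$ through which loops can be transferred to $U$; thus in particular $X$ is $\{1\}$-SLT at $x_0$ in the sense of Definition 2.2. By definition, $X$ is homotopically Hausdorff exactly when it is homotopically Hausdorff relative to the trivial subgroup $\{1\}$. So applying Theorem 2.5 with $H=\{1\}$ gives the equivalence: $X$ is homotopically Hausdorff if and only if $\{1\}$ is closed in $\pi_1^{wh}(X,x_0)$.

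Next, I would point out that $\pi_1^{wh}(X,x_0)$ carries the structure of a topological group under our SLT hypothesis. This is precisely Corollary 2.3 applied with $H=\{1\}$ (which is trivially normal), noting that $(p_e^{-1}(x_0))^{wh}$ is canonically identified with $\pi_1^{wh}(X,x_0)$. Thus $\pi_1^{wh}(X,x_0)$ is a topological group whenever $X$ is SLT at $x_0$.

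To finish, I would invoke the well-known fact that a topological group $G$ is $T_0$ if and only if its identity element forms a closed singleton. The left translations $L_g\colon G\to G$ are homeomorphisms, so closedness of $\{e\}$ propagates to closedness of every singleton, yielding $T_1$ (hence $T_0$). Conversely, if $G$ is $T_0$, then for every $g\neq e$ there is an open set containing exactly one of $e,g$; translating if necessary produces an open neighborhood of $e$ missing some prescribed $g$, whose complement (translated back) witnesses that $\{e\}$ is closed. Combining this with the equivalence from Theorem 2.5 gives: $X$ is homotopically Hausdorff $\iff$ $\{1\}$ is closed in $\pi_1^{wh}(X,x_0)$ $\iff$ $\pi_1^{wh}(X,x_0)$ is $T_0$.

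There is no real obstacle here; the corollary is essentially a packaging of Theorem 2.5 together with Corollary 2.3 and the elementary topological group fact above. The only point requiring care is to verify that the SLT hypothesis does suffice to put us in the topological group setting, so that $T_0$ automatically upgrades to closedness of the identity.
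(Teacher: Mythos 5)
Your proof is correct and follows the route the paper intends: the paper offers no written proof beyond ``follows directly from Theorem 2.5,'' and your argument is exactly that instantiation with $H=\{1\}$, supplemented by Corollary 2.3 to make $\pi_1^{wh}(X,x_0)$ a topological group so that $T_0$ is equivalent to closedness of the identity. You correctly flag the one point of substance --- the whisker topology is always homogeneous under left translation (so $\{1\}$ closed $\Rightarrow T_1$ needs nothing), but the reverse implication $T_0\Rightarrow\{1\}$ closed genuinely uses the SLT hypothesis via continuity of inversion.
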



\begin{corollary}
Let  $H\unlhd{\pi}_{1}(X,x_{0}) $ and $X$ be a locally path connected $H$-SLT at $x_{0}$. Then $H$ is a closed subgroup of $ {\pi}^{qtop}_{1}(X,x_{0}) $ if and only if $H$ is a closed subgroup of $ {\pi}^{wh}_{1}(X,x_{0}) $.
\end{corollary}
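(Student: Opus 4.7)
The plan is to handle the forward direction by a routine comparison of topologies and the reverse direction by combining Theorem 2.5 with a quasitopological-group argument inside $\pi_{1}^{qtop}(X,x_{0})$.

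For the forward direction, the whisker topology on $\pi_{1}(X,x_{0})$ is known to be finer than the quasitopology (see \cite{VZcom} and the introduction), so the identity map $\pi_{1}^{wh}(X,x_{0})\to\pi_{1}^{qtop}(X,x_{0})$ is continuous. Consequently any closed set in $\pi_{1}^{qtop}(X,x_{0})$ pulls back to a closed set in $\pi_{1}^{wh}(X,x_{0})$, which gives the desired implication immediately.

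For the reverse direction, I would assume $H$ is closed in $\pi_{1}^{wh}(X,x_{0})$ and apply Theorem 2.5 (available because $X$ is $H$-SLT at $x_{0}$) to conclude that $X$ is homotopically Hausdorff relative to $H$. Fix $[\sigma]\notin H$; since $H$ is a subgroup, also $[\sigma^{-1}]\notin H$. Applying the relative homotopically Hausdorff property to the constant path $c_{x_{0}}$ and the element $[\sigma^{-1}]$ produces an open neighborhood $U$ of $x_{0}$ such that no loop $\gamma$ in $U$ based at $x_{0}$ satisfies $[\gamma]\in H[\sigma^{-1}]$. Using local path connectedness, I replace $U$ by the path component of $x_{0}$ in $U$, so that $U$ is an open path-connected neighborhood of $x_{0}$.

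The crucial step is to observe that, for locally path connected $X$, the subgroup $i_{\ast}\pi_{1}(U,x_{0})$ is open in $\pi_{1}^{qtop}(X,x_{0})$; this rests on the Brazas comparison \cite{BrazT} between open subgroups of $\pi_{1}^{qtop}$ and those of the topological-group refinement $\pi_{1}^{\tau}$, recalled in the introduction. Since $\pi_{1}^{qtop}(X,x_{0})$ is a quasitopological group, left translation by $[\sigma]$ is a homeomorphism, so $[\sigma]\cdot i_{\ast}\pi_{1}(U,x_{0})$ is an open neighborhood of $[\sigma]$ in $\pi_{1}^{qtop}(X,x_{0})$. Normality of $H$ then forces this neighborhood to avoid $H$: a relation $[\sigma\ast\gamma]\in H$ with $\gamma$ a loop in $U$ would give $[\gamma]\in[\sigma^{-1}]H=H[\sigma^{-1}]$, contradicting the construction of $U$. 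Therefore the complement of $H$ is open in $\pi_{1}^{qtop}(X,x_{0})$, and $H$ is closed.

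The main obstacle will be justifying that $i_{\ast}\pi_{1}(U,x_{0})$ is open in $\pi_{1}^{qtop}(X,x_{0})$ under local path connectedness; this is precisely where the lpc hypothesis is genuinely used and is the only non-routine ingredient beyond what Theorem 2.5 supplies.
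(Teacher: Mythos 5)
Your forward direction is fine and is exactly the paper's: the whisker topology refines the compact-open (quotient) topology, so qtop-closed implies wh-closed. The reverse direction, however, has a genuine gap at what you yourself flag as the crucial step: the claim that $i_{\ast}\pi_{1}(U,x_{0})$ is open in $\pi_{1}^{qtop}(X,x_{0})$ for a path-connected open neighborhood $U$ of $x_{0}$ in a locally path connected space. This is false in general. The sets $i_{\ast}\pi_{1}(U,x_{0})$ are precisely the basic whisker neighborhoods of the identity, so if they were all qtop-open the two topologies would agree at the identity for every locally path connected space, contradicting the fact (Virk--Zastrow \cite{VZcom}, quoted in the introduction) that the whisker topology is in general strictly finer; the entire SLT machinery exists because this fails. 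The Brazas comparison you invoke says only that $\pi_{1}^{\tau}$ and $\pi_{1}^{qtop}$ have the same open subgroups; it says nothing about which subgroups those are. The correct criterion for locally path connected spaces is that a subgroup is qtop-open iff it contains a (path) Spanier group $\tilde{\pi}(\mathcal{V},x_{0})$ of an open cover, which involves conjugates $[\alpha\ast\beta\ast\alpha^{-1}]$ over \emph{all} paths $\alpha$, not just loops lying in one neighborhood of $x_{0}$; a single $i_{\ast}\pi_{1}(U,x_{0})$ need not contain such a group. Note also that your argument never uses the $H$-SLT hypothesis beyond Theorem 2.5, nor normality in any essential way, which is a warning sign: the implication ``wh-closed $\Rightarrow$ qtop-closed'' genuinely needs both.

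Two ways to repair it. The paper's route: from wh-closedness and Theorem 2.5 get homotopically Hausdorff relative to $H$; then Theorem 2.4 (this is where normality and $H$-SLT at $x_{0}$ are used) upgrades this to homotopically path Hausdorff relative to $H$; then \cite[Lemma 9]{BrazFa} (this is where local path connectedness is used) gives that $H$ is closed in $\pi_{1}^{qtop}(X,x_{0})$. Alternatively, your translation argument can be salvaged by replacing $i_{\ast}\pi_{1}(U,x_{0})$ with the subgroup $H\cdot i_{\ast}\pi_{1}(U,x_{0})$ (a subgroup since $H$ is normal): the $H$-SLT at $x_{0}$ property produces a path Spanier group contained in it, so it is qtop-open by \cite[Corollary 3.3]{TorabiS} (this is essentially the argument of Proposition 3.5 of the paper), and the coset $[\sigma]\cdot H\cdot i_{\ast}\pi_{1}(U,x_{0})$ is then a qtop-open neighborhood of $[\sigma]$ missing $H$ by the same normality computation you give. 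Either way, the step you labelled ``the only non-routine ingredient'' is exactly the one that needs a real argument, and the one you supplied does not work.
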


\begin{proof}
We know that $ {\pi}^{wh}_{1}(X,x_{0}) $ is finer than $ {\pi}^{qtop}_{1}(X,x_{0}) $ \cite{VZcom}. Let $H$ be a closed subgroup of $ {\pi}^{wh}_{1}(X,x_{0}) $, by assumption and according to Theorem 2.4 and 2.5, $H$ is a closed subgroup of $ {\pi}^{qtop}_{1}(X,x_{0}) $.
\end{proof}

Some people determined sufficient conditions  to conclude that $p_{H}:\tilde{X}_{H}\rightarrow X$ has the unique path lifting property. For instance, Fisher and Zastrow \cite{FischerC} showed that if $H$ is an open subgroup of $\pi_{1}^{qtop}(X,x_{0})$, then $p_{H}$ is a semicovering and hence it has the unique path lifting property.  Fischer et al.
\cite[Theorem 2.9]{Fischerexam} proved that if $X$ is homotopically path Hausdorff, then $p_{H}$ has the unique path lifting property, when $H=1$. Finally, Brazas and Fabel \cite{BrazFa} generalized all the above cases of unique path lifting and showed that if $X$ is locally path connected and $H$ is a closed subgroup of $\pi_{1}^{qtop}(X,x_{0})$, then $p_{H}$ has the unique path lifting property. Zastrow has announced that the converse of the above statement does not hold for Peano continua in the case $H = 1$ (see \cite{VZexam}). Note that the locally path connected space $X$ is homotopically path Hausdorff relative to $C$ if and only if $C$ is a closed subset in $\pi_{1}^{qtop}(X,x_{0})$ \cite[Lemma 9]{BrazFa}. Combining Theorem 2.5 and Corollary 2.7, we obtain the following corollary which answers the converse of the above result or equivalently the converse of part (ii) of Proposition 1.3.

\begin{corollary}
 Let $X$ be a locally path connected, SLT at $x_{0}$ and $H\unlhd \pi_{1}(X,x_{0})$. Then $p_{H}$ has the unique path lifting property if and only if $H$ is a closed subgroup of $ {\pi}^{qtop}_{1}(X,x_{0}) $.
\end{corollary}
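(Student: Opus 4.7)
The plan is to obtain Corollary 2.8 purely by chaining together Proposition 1.3, Theorem 2.4, Theorem 2.5, and Corollary 2.7, once we observe the trivial but essential fact that being SLT at $x_0$ implies being $H$-SLT at $x_0$ for every subgroup $H\le\pi_1(X,x_0)$ (the witnessing loop $\gamma$ in $U$ from the SLT definition already satisfies $[\alpha*\beta*\alpha^{-1}]=[\gamma]$, hence a fortiori $[\alpha*\beta*\alpha^{-1}]_H=[\gamma]_H$). So throughout the argument all the hypotheses of the earlier results are automatically available.

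For the forward direction, I would start from the assumption that $p_H$ has the unique path lifting property. Proposition 1.3(i) then gives that $X$ is homotopically Hausdorff relative to $H$. Since $X$ is $H$-SLT at $x_0$, Theorem 2.5 upgrades this to the statement that $H$ is a closed subgroup of $\pi_1^{wh}(X,x_0)$. Finally, Corollary 2.7 (which needs exactly the hypotheses $H\unlhd\pi_1(X,x_0)$, locally path connected, and $H$-SLT at $x_0$) transfers closedness from the whisker topology to the quotient topology, so $H$ is closed in $\pi_1^{qtop}(X,x_0)$.

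For the converse, I would simply reverse the chain. Assuming $H$ is closed in $\pi_1^{qtop}(X,x_0)$, Corollary 2.7 yields that $H$ is closed in $\pi_1^{wh}(X,x_0)$; Theorem 2.5 then gives that $X$ is homotopically Hausdorff relative to $H$; normality of $H$ together with the $H$-SLT property at $x_0$ let me invoke Theorem 2.4 to promote this to being homotopically path Hausdorff relative to $H$; and Proposition 1.3(ii) supplies the unique path lifting of $p_H$.

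There is essentially no hard step here, since each implication is packaged in a previously established result; the only thing to be careful about is verifying that the hypotheses of Theorem 2.4, Theorem 2.5, and Corollary 2.7 are all simultaneously met under ``SLT at $x_0$, locally path connected, $H\unlhd\pi_1$,'' which is why the reduction ``SLT $\Rightarrow$ $H$-SLT'' must be stated at the outset. The main thing to double-check is that the application of Corollary 2.7 in both directions is symmetric (it is, since it is an ``if and only if''), so no additional assumption leaks in when we pass between $\pi_1^{wh}$ and $\pi_1^{qtop}$.
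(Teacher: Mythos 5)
Your argument is correct, and its forward direction is exactly the paper's: from unique path lifting, Proposition 1.3(i) gives homotopically Hausdorff relative to $H$, Theorem 2.5 (via the observation that SLT at $x_0$ implies $H$-SLT at $x_0$ for every $H$, since $[\alpha\ast\beta\ast\alpha^{-1}]=[\gamma]$ a fortiori gives $[\alpha\ast\beta\ast\alpha^{-1}]_H=[\gamma]_H$) yields closedness in $\pi_1^{wh}(X,x_0)$, and Corollary 2.7 transfers this to $\pi_1^{qtop}(X,x_0)$. Where you diverge is the converse: the paper does not argue it inside the proof at all, but instead leans on the externally cited Brazas--Fabel theorem that for locally path connected $X$, closedness of $H$ in $\pi_1^{qtop}(X,x_0)$ already forces unique path lifting of $p_H$ (no SLT hypothesis needed). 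You instead re-derive this implication internally by running the chain backwards: Corollary 2.7 (here only the trivial direction, since the whisker topology refines the compact-open topology) gives closedness in $\pi_1^{wh}$, Theorem 2.5 gives homotopically Hausdorff relative to $H$, Theorem 2.4 (using normality and the $H$-SLT property) upgrades this to homotopically path Hausdorff relative to $H$, and Proposition 1.3(ii) concludes. Both routes are valid; the paper's citation is more economical and shows the converse holds under weaker hypotheses, while your version keeps the corollary self-contained within the results already established in Section 2.
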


\begin{proof}
Let $p_{H}$ has the unique path lifting, then according to \cite[Proposition 6.4]{Zastrow} $X$ is homotopically Hausdorff relative to $H$. Using Theorem 2.5, since $X$ is an SLT space at $x_{0}$, $H$ is closed a subgroup of $\pi_{1}^{wh}(X,x_{0})$. Therefore, by Corollary 2.7, $H$ is a closed subgroup of $ {\pi}^{qtop}_{1}(X,x_{0})$.
\end{proof}

\begin{corollary}
Let $X$ be locally path connected, $H$-SLT at $x_{0}$ and homotopically Hausdorff relative to $ K $, where $K$ is a normal subgroup of $\pi_{1}(X,x_{0})$ containing $H$. Then $p_{K}:\tilde{X}_{K}\rightarrow X$ has the unique path lifting property.
\end{corollary}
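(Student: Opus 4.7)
The plan is to reduce the statement directly to two earlier ingredients: Theorem 2.4 (equivalence of the two relative Hausdorffness notions in the $H$-SLT/normal setting) and Proposition 1.3(ii) (homotopically path Hausdorff implies unique path lifting). The only step not already explicit in the paper is that the $H$-SLT property is automatically inherited by any subgroup of $\pi_1(X,x_0)$ that contains $H$.

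The first step is this propagation lemma. Fix a path $\alpha$ from $x_0$ to some point $x$ and a neighborhood $U$ of $x_0$. By the $H$-SLT hypothesis at $x_0$, there is a neighborhood $V$ of $x$ so that every loop $\beta\subseteq V$ based at $x$ admits a loop $\gamma\subseteq U$ based at $x_0$ with $[\alpha\ast\beta\ast\alpha^{-1}]_H=[\gamma]_H$, i.e.\ $[\alpha\ast\beta\ast\alpha^{-1}\ast\gamma^{-1}]\in H$. Since $H\subseteq K$, this element lies in $K$, so $[\alpha\ast\beta\ast\alpha^{-1}]_K=[\gamma]_K$ with exactly the same $V$ and $\gamma$. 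Therefore $X$ is $K$-SLT at $x_0$.

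With $K\unlhd\pi_1(X,x_0)$, $X$ being $K$-SLT at $x_0$, and $X$ homotopically Hausdorff relative to $K$ by hypothesis, Theorem 2.4 applies to $K$ and yields that $X$ is homotopically path Hausdorff relative to $K$. Proposition 1.3(ii) then gives that the endpoint projection $p_K:\tilde{X}_K\to X$ has the unique path lifting property, which is the desired conclusion.

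There is essentially no serious obstacle here; the argument is a short packaging of results already established earlier in the paper. If one wishes to pinpoint a subtle point, it is the elementary observation that the $H$-SLT condition is monotone in the subgroup parameter, which is what lets us upgrade the hypothesis from $H$ to the potentially larger normal subgroup $K$ needed by Theorem 2.4. The local path connectedness assumption is not actually invoked in this chain of deductions, and is retained only for parallelism with Corollary 2.9 and the broader context of the paper.
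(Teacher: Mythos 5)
Your proposal is correct, and its first two steps coincide with the paper's own proof: the monotonicity of the $H$-SLT condition in the subgroup parameter (which the paper states without proof here but records explicitly in Remark 3.6, and which you rightly verify from $[\alpha\ast\beta\ast\alpha^{-1}\ast\gamma^{-1}]\in H\subseteq K$), followed by Theorem 2.4 applied to $K$ to obtain that $X$ is homotopically path Hausdorff relative to $K$. The two arguments diverge only at the final step: the paper routes through \cite[Lemma 9]{BrazFa}, concluding that $K$ is a closed subgroup of $\pi_{1}^{qtop}(X,x_{0})$ and deducing the unique path lifting property from there, whereas you invoke Proposition 1.3(ii) directly. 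Your route is shorter and, as you observe, makes the local path connectedness hypothesis superfluous, since Proposition 1.3(ii) needs no such assumption; the paper's detour genuinely requires local path connectedness because \cite[Lemma 9]{BrazFa} is stated only for locally path connected spaces. The trade-off is that the paper's version produces the additional intermediate fact that $K$ is closed in $\pi_{1}^{qtop}(X,x_{0})$, which fits the theme of Section 3, while yours isolates the minimal hypotheses needed for the stated conclusion.
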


\begin{proof}
Since $X$ is $H$-SLT at $x_{0}$ and $H\leq K\unlhd \pi_{1}(X,x_{0})$, $X$ is a $K$-SLT space at  $x_{0}$. By assumption and Theorem 2.4, $X$ is homotopically path Hausdorff relative to $K$. Therefore, using \cite[Lemma 9]{BrazFa} $K$ is a closed subgroup of $ {\pi}^{qtop}_{1}(X,x_{0}) $ and hence $p_{K}:\tilde{X}_{K}\rightarrow X$ has the unique path lifting property.
\end{proof}
Small loop transfer spaces have remarkable relation with the compact-open topology and the whisker topology on $\tilde{X}_{e}$.  For example, Brodskiy et al. in \cite{BroU} showed that SLT spaces are equivalent with  $\tilde{X}_{e}^{wh}=\tilde{X}_{e}^{top}$. Similarly, it is of interest to determine when the compact-open topology coincides with the whisker topology on $ \tilde{X}_H $. In other words,
\begin{center}
`` What is the equivalent condition for $\tilde{X}_{H}^{wh}=\tilde{X}_{H}^{top}$? ''
\end{center}

We will answer to this question in the next section. To answer this question, we need to introduce the following notion.

\begin{definition}
Let $ H $ be any subgroup of $ \pi_{1}(X,x_{0}) $. Then $X$ is called an $H$-SLT space if for every $x\in{X}$ and for every path $\lambda$ from $x_{0}$ to $x$, $X$ is a $[\lambda^{-1}H\lambda]$-SLT at $x$.
\end{definition}
\begin{remark}
Recall that $X$ is SLT space if it is SLT at $x$ for every $x\in{X}$. Note that this fact is not correct for $H$-SLT spaces because $H$ is not a subgroup of  $ \pi_{1}(X,x) $ for any point $x\neq x_{0}$. To present a similar fact, we can consider subgroups corresponding to $H$ in $ \pi_{1}(X,x) $ by the isomorphism $\varphi_{\lambda}:\pi_{1}(X,x_{0})\rightarrow \pi_{1}(X,x)$ for every path $\lambda$ from $x_{0}$ to $x$. We denote $[\lambda]^{-1}H[\lambda]$  by $[\lambda^{-1} H \lambda]$.
\end{remark}

Brodskiy et al. \cite{BroU} defined the map $h_{\alpha}:\pi_{1}^{wh}(X,y)\rightarrow \pi_{1}^{wh} (X,x)$ by $h_{\alpha}([\beta])=[\alpha\ast\beta\ast\alpha^{-1}]$, where $\alpha$ is a path from $x$ to $y$ and showed that the path $\alpha$ has the property SLT if and only if $h_{\alpha}$ is continuous. We can explain this result by the fibers of the endpoint projection $p_{e}:\tilde{X}_{e}\rightarrow X$ as follows. The path $\alpha$ has the property SLT if and only if the map $f^{\alpha}_{e}:(p^{-1}_{e}(y))^{wh}\rightarrow (p_{e}^{-1}(x))^{wh}$  defined by $f^{\alpha}_{e}([\delta])=[\delta\ast\alpha^{-1}]$ is continuous. Moreover, we can describe $H-$SLT spaces in terms of the map $f^{\alpha}_{H}: (p^{-1}_{H}(y))^{wh}\rightarrow (p_{H}^{-1}(x))^{wh}$ where  $f^{\alpha}_{H}([\delta]_{H})=[\delta\ast\alpha^{-1}]_{H}$.

\begin{proposition}
Let $ H\leq \pi_{1}(X,x_{0}) $ and $ p_{H}:\tilde{X}_{H}\rightarrow X $ be the endpoint projection. Then $ X $ is an $H$-SLT space if and only if $ f^{\alpha}_{H}:(p^{-1}_{H}(y))^{wh}\rightarrow (p^{-1}_{H}(x))^{wh} $ is continuous for every path $ \alpha $ whit $\alpha(0)=x$, $\alpha(1)=y$.
\end{proposition}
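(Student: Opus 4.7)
The plan is to unpack what continuity of $f^{\alpha}_{H}$ means in terms of basic whisker neighborhoods on the fibers, and match that with the $H$-SLT condition at the relevant point. First I would record the shape of whisker-basic neighborhoods of a point $[\delta]_{H}$ in $(p_{H}^{-1}(y))^{wh}$: intersecting $([\delta]_{H},V)$ with the fiber forces the prolonging path to remain at $y$, so such neighborhoods are exactly $\{[\delta\ast\beta]_{H}:\beta\ \text{a loop in } V \text{ at } y\}$ for $V$ an open neighborhood of $y$. This preliminary observation converts the continuity assertion into a purely combinatorial statement about reshuffling the concatenations $\delta\ast\beta\ast\alpha^{-1}$ and $\delta\ast\alpha^{-1}\ast\gamma$.

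For the forward direction, I fix a path $\alpha$ from $x$ to $y$, a point $[\delta]_{H}\in p_{H}^{-1}(y)$ and a basic neighborhood $([\delta\ast\alpha^{-1}]_{H},U)$ of $f^{\alpha}_{H}([\delta]_{H})=[\delta\ast\alpha^{-1}]_{H}$. Setting $\lambda=\delta\ast\alpha^{-1}$, which is a path from $x_{0}$ to $x$, the $H$-SLT hypothesis says that $X$ is $[\lambda^{-1}H\lambda]$-SLT at $x$. Applying this to the path $\alpha$ starting at $x$ and the open set $U$ produces an open $V\ni y$ so that for every loop $\beta$ in $V$ based at $y$ there is a loop $\gamma$ in $U$ based at $x$ with $[\alpha\ast\beta\ast\alpha^{-1}\ast\gamma^{-1}]\in[\lambda^{-1}H\lambda]$. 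Conjugating both sides by $[\lambda]$ and reducing the concatenation algebraically gives $[\delta\ast\beta\ast\alpha^{-1}\ast\gamma^{-1}\ast\alpha\ast\delta^{-1}]\in H$, i.e.\ $[\delta\ast\beta\ast\alpha^{-1}]_{H}=[\delta\ast\alpha^{-1}\ast\gamma]_{H}$. This is precisely the statement that $f^{\alpha}_{H}$ maps $([\delta]_{H},V)\cap p_{H}^{-1}(y)$ into $([\delta\ast\alpha^{-1}]_{H},U)\cap p_{H}^{-1}(x)$, which establishes continuity.

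For the converse I run the computation backwards. Fix $x\in X$ and a path $\lambda$ from $x_{0}$ to $x$; I need to show $X$ is $[\lambda^{-1}H\lambda]$-SLT at $x$. Take a path $\mu$ starting at $x$ (ending at $z$) and an open $U\ni x$, and apply the continuity hypothesis to $\alpha=\mu$ and the point $[\lambda\ast\mu]_{H}\in p_{H}^{-1}(z)$, whose image under $f^{\mu}_{H}$ is $[\lambda]_{H}$. Continuity of $f^{\mu}_{H}$ at $[\lambda\ast\mu]_{H}$ produces, from the target neighborhood $([\lambda]_{H},U)\cap p_{H}^{-1}(x)$, an open $V\ni z$ such that every loop $\beta$ in $V$ at $z$ yields a loop $\gamma$ in $U$ at $x$ with $[\lambda\ast\mu\ast\beta\ast\mu^{-1}]_{H}=[\lambda\ast\gamma]_{H}$. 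Unwinding the $H$-equivalence and pre-multiplying by $[\lambda^{-1}]$ gives $[\mu\ast\beta\ast\mu^{-1}\ast\gamma^{-1}]\in[\lambda^{-1}H\lambda]$, which is precisely the $[\lambda^{-1}H\lambda]$-SLT condition at $x$.

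The main obstacle is the bookkeeping needed to pass between the conjugated subgroup $[\lambda^{-1}H\lambda]\leq \pi_{1}(X,x)$ and the original $H\leq \pi_{1}(X,x_{0})$ while keeping the correct prefix in the $H$-equivalence class; concretely, one must make sure that the identity obtained from the SLT condition inside $\pi_{1}(X,x)$ becomes, after prepending the appropriate prefix path, exactly the $H$-equivalence in the fiber $p_{H}^{-1}(x)$. Since every step reduces to a single path-concatenation identity rather than coset manipulation, normality of $H$ is not required, so the argument goes through for arbitrary subgroups, matching the hypothesis of the proposition.
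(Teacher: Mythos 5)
Your proof is correct and follows essentially the same route as the paper's: both directions reduce continuity of $f^{\alpha}_{H}$ on basic whisker neighborhoods to the $[\lambda^{-1}H\lambda]$-SLT condition at $\alpha(0)$ with $\lambda$ the appropriate prefix path ($\delta\ast\alpha^{-1}$ in the forward direction, the given $\lambda$ in the converse), via the same conjugation identity $[\alpha\ast\beta\ast\alpha^{-1}\ast\gamma^{-1}]\in[\lambda^{-1}H\lambda]\iff[\delta\ast\beta\ast\alpha^{-1}]_{H}=[\delta\ast\alpha^{-1}\ast\gamma]_{H}$. Your explicit preliminary remark that fiber-restricted whisker neighborhoods are indexed by loops (not arbitrary paths) in $V$, and your observation that normality of $H$ is never used, are small clarifications of points the paper leaves implicit, but the argument is the same.
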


\begin{proof}
 The map $f^{\alpha}_{H}$ is clearly bijection. It suffices to show that $f^{\alpha}_{H}$ is continuous. Pick  $[\delta]_{H}\in{p_{H}^{-1}(y)}$. Let  $N=([ \delta\ast \alpha^{-1}]_{H},U)$ be an open basis in $(p_{H}^{-1}(x))^{wh}$. Since $X$ is an $H$-SLT space, there is an open subset $V$ of $X$ based at $y$ such that for every loop $\beta$ in $V$ based at $y$ there is a loop $\gamma$ in $U$ based at $x$ such that $[\alpha\ast\beta\ast\alpha^{-1}]_{[\alpha\ast\delta^{-1}H\delta\ast\alpha^{-1}]}=[\gamma]_{[\alpha\ast\delta^{-1}H\delta\ast\alpha^{-1}]}$, or equivalently $[\delta\ast\beta\ast\alpha^{-1}]_{H}=[\delta\ast\alpha^{-1}\ast\gamma]_{H}$.
Consider open basis $M=([\delta]_{H},V)$ of $p_{H}^{-1}(y)$. We show that $f^{\alpha}_{H}(M)\subseteq N$. According to the definition of $f^{\alpha}_{H}$ we know that $f^{\alpha}_{H}([\delta\ast\beta]_{H})=[\delta\ast\beta\ast\alpha^{-1}]_{H}$. On the other words, we have $[\delta\ast\beta\ast\alpha^{-1}]_{H}=[\delta\ast\alpha^{-1}\ast\gamma]_{H}$ which implies that $f^{\alpha}_{H}$ is continuous.

Conversely, pick $x\in{X}$. Let $\lambda:I\rightarrow X$ be a path from $x_{0}$ to $x$, and $U$ be an open neighbourhood of $x$. By assumption, the map $ f^{\alpha}_{H}:(p^{-1}_{H}(y))^{wh}\rightarrow (p^{-1}_{H}(x))^{wh} $ is continuous, where $f^{\alpha}_{H}([\lambda\ast\alpha]_{H})=[\lambda]_{H}$. Therefore there  is an open subset $y\in{V}$ such that $f^{\alpha}_{H}([\lambda\ast\alpha]_{H},V))\subseteq ([\lambda]_{H},U))$. Thus, for every loop $\beta$ in $V$ there is a loop $\gamma$ in $U$ such that $[\lambda\ast\alpha\ast\beta\ast\alpha^{-1}]_{H}=[\lambda\ast\gamma]_{H}$ or equivalently $[\alpha\ast\beta\ast\alpha^{-1}]_{[\lambda^{-1}H\lambda]}=[\gamma]_{[\lambda^{-1}H\lambda]}$ which implies that $X$ is $H$-SLT space.
\end{proof}


\begin{definition}
A path $\alpha:I\rightarrow X$ from $x$ to $y$ is called an $H$-SLT path when the map $f^{\alpha}_{H}:(p^{-1}_{H}(y))^{wh}\rightarrow (p^{-1}_{H}(x))^{wh} $ defined by $ f_{\alpha}([\delta]_{H})=[ \delta\ast \alpha^{-1}]_{H}$ is continuous, or equivalently, for every path $\lambda$ from $x_{0}$ to $x$ and for every open neighborhood U of $\alpha(0)$ in $X$, there exists an open neighborhood $V$ of $\alpha(1)$ in $X$ such that for every loop $\beta:(I,0)\rightarrow (V,\alpha(1))$ there is a loop $\gamma:(I,0)\rightarrow (U,\alpha(0))$ such that $ [\alpha\ast\beta\ast\alpha^{-1}]_{[\lambda^{-1}H\lambda]}=[\gamma]_{[\lambda^{-1}H\lambda]} $. Note that $X$ is an $H$-SLT space if every path in $X$ is an $H$-SLT path.
\end{definition}


Semicovering maps are introduced by Brazas \citep{BrazS}. A semicovering map $p:\tilde{X}\rightarrow X$ is a local homeomorphism with continuous lifting of paths and homotopies \citep{BrazO}. Recall that the fibers of covering and semicovering map are homeomorphic \citep{BrazO}. In comparison with $\textbf{lpc}_{0}$-covering spaces, there is a generalized universal covering space with non-homeomorphic fibers \citep[Example 4.15]{Zastrow}. Every $\textbf{lpc}_{0}$-covering map $p:\tilde{X}\rightarrow X$ is equivalent to  $p_{H}:\tilde{X}_{H}\rightarrow X$, where $H=p_{\ast}\pi_{1}(\tilde{X},\tilde{x}_{0})$ \citep[Lemma 5.10]{BrazG}. Using Proposition 2.12 we obtain the following corollary for $\textbf{lpc}_{0}$-covering maps.


\begin{corollary}
If $ X $ is a path connected SLT-space, then all fibers of $\textbf{lpc}_{0}$-covering map $p:\tilde{X}\rightarrow X$ are homeomorphic.
\end{corollary}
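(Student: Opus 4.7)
The plan is to identify the fibers of the given $\textbf{lpc}_0$-covering with the fibers of a whiskered endpoint projection and then transfer the natural bijection between fibers (conjugation/translation by a connecting path) into a homeomorphism via Proposition 2.12.

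First, invoke \cite[Lemma 5.10]{BrazG} to replace $p:\tilde{X}\to X$ by an equivalent endpoint projection $p_{H}:\tilde{X}^{wh}_{H}\to X$, where $H=p_{\ast}\pi_{1}(\tilde{X},\tilde{x}_{0})$. Since this equivalence is a homeomorphism commuting with the projections, it sends each fiber of $p$ homeomorphically onto the corresponding fiber of $p_{H}$; it therefore suffices to prove that any two fibers $(p_{H}^{-1}(x))^{wh}$ and $(p_{H}^{-1}(y))^{wh}$ are homeomorphic.

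Next, I would observe that the SLT hypothesis automatically upgrades to the $H$-SLT property for the subgroup $H$ above. Indeed, if $X$ is SLT at every point, then for any path $\alpha$ starting at a point $x$, any neighborhood $U$ of $x$, and any sufficiently small loop $\beta$, there is a loop $\gamma$ in $U$ with $[\alpha\ast\beta\ast\alpha^{-1}]=[\gamma]$ in $\pi_{1}(X,x)$; this equality of full homotopy classes descends to equality modulo any conjugate $[\lambda^{-1}H\lambda]$. Consequently $X$ is an $H$-SLT space in the sense of Definition 2.10.

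Now apply Proposition 2.12: for every path $\alpha$ in $X$ from $x$ to $y$, the bijection
\[
f^{\alpha}_{H}:(p^{-1}_{H}(y))^{wh}\longrightarrow (p^{-1}_{H}(x))^{wh},\qquad [\delta]_{H}\longmapsto [\delta\ast\alpha^{-1}]_{H},
\]
is continuous. Applying the same proposition to the reverse path $\alpha^{-1}$ shows that $f^{\alpha^{-1}}_{H}$ is also continuous, and a direct computation confirms $f^{\alpha^{-1}}_{H}\circ f^{\alpha}_{H}=\mathrm{id}$ and $f^{\alpha}_{H}\circ f^{\alpha^{-1}}_{H}=\mathrm{id}$. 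Hence $f^{\alpha}_{H}$ is a homeomorphism. Since $X$ is path connected, such an $\alpha$ exists for every pair $x,y\in X$, and combining this with the fiberwise homeomorphism from Step~1 yields the desired conclusion.

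The only mildly delicate point is justifying the reduction SLT $\Rightarrow$ $H$-SLT cleanly, because the definition of $H$-SLT space quantifies over conjugates $[\lambda^{-1}H\lambda]$ rather than $H$ itself; once one checks that equality of homotopy classes trivially survives passage to any quotient, no real obstacle remains, and the rest is a clean application of Proposition 2.12 together with Lemma 5.10 of \cite{BrazG}.
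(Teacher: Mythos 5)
Your argument is correct and is exactly the route the paper intends: reduce to the endpoint projection $p_{H}$ via \cite[Lemma 5.10]{BrazG}, note that an SLT space is automatically an $H$-SLT space (equality of homotopy classes descends to equality modulo any conjugate of $H$), and then apply Proposition 2.12 to $\alpha$ and to $\alpha^{-1}$ to see that each $f^{\alpha}_{H}$ is a homeomorphism between fibers. The paper leaves these details implicit, and your write-up supplies them correctly, including the point that the equivalence of coverings carries fibers homeomorphically onto fibers.
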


Using the property of the unique path lifting property of semicovering map, we can clearly show  that if $H$ is a subgroup of $\pi_{1}(X,x_{0})$ then $\tilde{H}$ is a  subgroup of $\pi_{1}(\tilde{X},\tilde{x_{0}})$, where $p_{\ast}\tilde{H}=H$ and $p(\tilde{x_{0}})=x_{0}$.


\begin{proposition}
Let $p:\tilde{X}\rightarrow X$ be a semicovering map and let $\alpha$ be an $H$-SLT path from $x_{0}$ to $x$ in $X$. Then the lift of $\alpha$ in $\tilde{X}$ is an $\tilde{H}$-SLT path in $\tilde{X}$.
\end{proposition}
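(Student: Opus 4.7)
The plan is to verify Definition 2.15 directly for $\tilde{\alpha}$, which starts at the basepoint $\tilde{x}_{0}\in p^{-1}(x_{0})$ and ends at $\tilde{x}:=\tilde{\alpha}(1)$. We fix an arbitrary loop $\tilde{\lambda}$ based at $\tilde{x}_{0}$ and an arbitrary open neighborhood $\tilde{U}$ of $\tilde{\alpha}(0)=\tilde{x}_{0}$, and we must produce the required open neighborhood $\tilde{V}$ of $\tilde{x}$. Since shrinking $\tilde{U}$ only strengthens the conclusion, we may assume at the outset that $p$ restricts to a homeomorphism $p|_{\tilde{U}}\colon\tilde{U}\to U:=p(\tilde{U})$, and we also fix an open neighborhood $\tilde{W}$ of $\tilde{x}$ on which $p$ restricts to a homeomorphism onto $W:=p(\tilde{W})$; both are available because every semicovering is a local homeomorphism.

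Next we push the data down. Setting $\lambda:=p\circ\tilde{\lambda}$, which is a loop at $x_{0}$, the $H$-SLT property of $\alpha$ yields an open neighborhood $V$ of $x=\alpha(1)$ such that every loop $\beta$ in $V$ at $x$ admits some loop $\gamma$ in $U$ at $x_{0}$ with $[\alpha\ast\beta\ast\alpha^{-1}\ast\gamma^{-1}]\in[\lambda^{-1}H\lambda]$. We then set $\tilde{V}:=\tilde{W}\cap p^{-1}(V)$, an open neighborhood of $\tilde{x}$, and claim this is the required witness. Given any loop $\tilde{\beta}$ in $\tilde{V}$ based at $\tilde{x}$, its projection $\beta:=p\circ\tilde{\beta}$ is a loop in $V$ based at $x$; we pick $\gamma$ for this $\beta$ as above and define $\tilde{\gamma}:=(p|_{\tilde{U}})^{-1}\circ\gamma$, a loop in $\tilde{U}$ based at $\tilde{x}_{0}$.

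It remains to verify that $[\tilde{\alpha}\ast\tilde{\beta}\ast\tilde{\alpha}^{-1}\ast\tilde{\gamma}^{-1}]\in[\tilde{\lambda}^{-1}\tilde{H}\tilde{\lambda}]$. Applying $p_{\ast}$ sends this class to $[\alpha\ast\beta\ast\alpha^{-1}\ast\gamma^{-1}]$, which belongs to $[\lambda^{-1}H\lambda]=p_{\ast}[\tilde{\lambda}^{-1}\tilde{H}\tilde{\lambda}]$ using $p_{\ast}\tilde{H}=H$. The main obstacle is precisely this coset-matching: transferring a containment downstairs to a containment upstairs requires the injectivity of $p_{\ast}\colon\pi_{1}(\tilde{X},\tilde{x}_{0})\to\pi_{1}(X,x_{0})$, which we invoke from the fact that semicoverings have the unique path lifting property. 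Granted this injectivity, the upstairs class is forced into $[\tilde{\lambda}^{-1}\tilde{H}\tilde{\lambda}]$, completing the verification; the remaining steps are routine transfers between $\tilde{X}$ and $X$ via the local homeomorphism structure of $p$.
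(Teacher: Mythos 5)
Your proof is correct and follows essentially the same route as the paper's: push $\tilde{U}$ and the loop data down through the local homeomorphism, invoke the $H$-SLT property of $\alpha$ to obtain $V$, lift back up, and use the injectivity of $p_{\ast}$ (a consequence of unique lifting of paths and homotopies for semicoverings) to transfer the coset condition upstairs to $[\tilde{\lambda}^{-1}\tilde{H}\tilde{\lambda}]$. In fact your write-up is slightly more complete than the paper's, since you carry along the arbitrary base path $\tilde{\lambda}$ demanded by the definition of an $\tilde{H}$-SLT path, whereas the paper's proof only records the case where $\lambda$ is trivial.
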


\begin{proof}
Let $\alpha$ be an $H$-SLT path from $x_{0}$ to $x$ and $\tilde{\alpha}$ be the lift of $\alpha$ with $\tilde{\alpha}(0)=\tilde{x}_{0}$ and $\tilde{\alpha}(1)=\tilde{x}$. Without loss of generality, let $\tilde{U}$ be an open neighborhood in $\tilde{X}$ containing of $\tilde{x}_{0}$ such that $p\vert_{\tilde{U}}:\tilde{U}\rightarrow U$ is a homeomorphism. Now, since $\alpha$ is an $H$-SLT path, there is an open neighborhood $V$ of $\alpha(1)$ such that for every loop $\beta$ in $V$ at $\alpha(1)$ there is a loop $\delta$ in $U$ at $\alpha(0)$ such that $[\alpha\ast \beta\ast\alpha^{-1}\ast\delta^{-1}]\in{H}$. Again without loss of generality, let $\tilde{W}$ be an open neighborhood of $\tilde{X}$ such that $p\vert_{\tilde{W}}:\tilde{W}\rightarrow W$ is a homeomorhism, where $W$ is an open subset of $V$. By the uniqueness of path lifting property of the map $p:\tilde{X}\rightarrow X$ we have $[\tilde{\alpha}\ast \tilde{\beta}\ast\tilde{\alpha}^{-1}\ast\tilde{\delta}^{-1}]\in{\tilde{H}}$. Therefore we imply that $\tilde{\alpha}$ is an $ \tilde{H} $-SLT path.
\end{proof}


\begin{corollary}
Let $p:\tilde{X}\rightarrow X$ be a semicovering map. If $X$ is an SLT space, then so is $\tilde{X}$.
\end{corollary}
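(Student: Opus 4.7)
The plan is to reduce Corollary 2.16 to Proposition 2.15 by observing that the classical SLT condition is precisely the $H$-SLT condition for $H$ the trivial subgroup, and that every path in $\tilde{X}$ arises as the lift of a path in $X$.

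More concretely, I would first fix an arbitrary basepoint $\tilde{x} \in \tilde{X}$ and an arbitrary path $\tilde{\alpha}$ in $\tilde{X}$ with $\tilde{\alpha}(0)=\tilde{x}$, and set $\alpha = p\circ \tilde{\alpha}$ and $x = p(\tilde{x})$. Then $\alpha$ is a path in $X$ starting at $x$, and by uniqueness of path lifting for the semicovering $p$, its unique lift starting at $\tilde{x}$ coincides with $\tilde{\alpha}$. Since $X$ is SLT, it is in particular SLT at $x$, which is exactly the statement that $\alpha$ is an $H$-SLT path in the sense of Definition 2.13, with $H$ taken to be the trivial subgroup of $\pi_1(X,x)$.

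Next I would invoke Proposition 2.15 with this choice of $H$. The remark preceding Proposition 2.15 defines $\tilde{H}$ as a subgroup of $\pi_1(\tilde{X},\tilde{x})$ with $p_{\ast}\tilde{H}=H$; since semicoverings satisfy unique path lifting, $p_{\ast}$ is injective on $\pi_1$, so $H = \{1\}$ forces $\tilde{H} = \{1\}$. Proposition 2.15 then gives that $\tilde{\alpha}$ is a $\tilde{H}$-SLT path, i.e., an SLT path in $\tilde{X}$ in the classical sense. Unwrapping Definition 2.13, this means exactly that for every open neighbourhood $\tilde{U}$ of $\tilde{x}$ there is an open neighbourhood $\tilde{V}$ of $\tilde{\alpha}(1)$ so that any loop $\tilde{\beta}$ in $\tilde{V}$ based at $\tilde{\alpha}(1)$ satisfies $[\tilde{\alpha}\ast\tilde{\beta}\ast\tilde{\alpha}^{-1}]=[\tilde{\gamma}]$ in $\pi_1(\tilde{X},\tilde{x})$ for some loop $\tilde{\gamma}$ in $\tilde{U}$ at $\tilde{x}$.

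Since $\tilde{x}\in \tilde{X}$ and the path $\tilde{\alpha}$ from $\tilde{x}$ were arbitrary, this yields that $\tilde{X}$ is SLT at every point, hence an SLT space. The only small subtlety I anticipate is the injectivity of $p_{\ast}$ for semicoverings (so that $\tilde{H}$ is trivial whenever $H$ is trivial); this follows immediately from the unique path-lifting property that is part of the definition of a semicovering, so there is no genuine obstacle and the argument is essentially a direct specialisation of Proposition 2.15.
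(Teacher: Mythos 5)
Your proposal is correct and follows exactly the route the paper intends: Corollary 2.16 is stated as an immediate consequence of Proposition 2.15 applied with $H$ the trivial subgroup (so that $\tilde{H}$ is trivial by injectivity of $p_{\ast}$ for semicoverings), together with the observation that every path in $\tilde{X}$ is the lift of its projection. No further comment is needed.
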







\section{Relationship Between Open Subsets of $\pi_{1}^{wh}$ and $\pi_{1}^{qtop}$}

Brodskiy et al. \cite[Theorem 4.11]{BroU} showed that the coincidence of the compact-open topology and the whisker topology on $\tilde{X}_{e}$ provides a property on $X$ that they call it small loop transfer, i.e, the space $X$ is an SLT space. Also, they proved that the converse holds for Peano spaces (see \cite[Theorem 4.12]{BroU}). In the following we extend these results to $H$-SLT spaces.

\begin{theorem}
Let $ H $ be a normal subgroup of $ \pi_{1}(X,x_{0}) $ and $ X $ be a locally path connected space. Then $X$ is an $H$-SLT space if and only if $ \tilde{X}_{[\lambda^{-1} H\lambda]}^{wh}=\tilde{X}_{[\lambda^{-1} H\lambda]}^{top}$ for every path $\lambda$ from $x_{0}$ to any $x$ in $X$.
\end{theorem}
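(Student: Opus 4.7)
The plan relies on the general fact that the whisker topology on $\tilde X_K$ is always finer than or equal to the quotient of the compact-open topology on $P(X,x)$, so each direction reduces to verifying a single inclusion. Since $H\unlhd\pi_1(X,x_0)$, each conjugate $K=[\lambda^{-1}H\lambda]$ is normal in $\pi_1(X,x)$, a fact I will use crucially.

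For the converse direction, assume $\tilde X_K^{wh}=\tilde X_K^{top}$ for every $\lambda$. To verify $K$-SLT at $x$ for a path $\alpha$ starting at $x$ and open $U\ni x$, I would consider the map $\phi:\Omega(X,\alpha(1))\to\tilde X_K^{top}$, $\phi(\mu)=[\alpha\ast\mu\ast\alpha^{-1}]_K$, which is continuous as the concatenation map into $P(X,x)$ with the compact-open topology composed with the quotient $q$. The hypothesis makes $([c_x]_K,U)$ open in $\tilde X_K^{top}$, and since $\phi(c_{\alpha(1)})=[c_x]_K$ lies in it, the preimage $\phi^{-1}(([c_x]_K,U))$ is a compact-open neighborhood of the constant loop $c_{\alpha(1)}$. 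Because $c_{\alpha(1)}$ has image $\{\alpha(1)\}$, such a neighborhood contains a subbasic set $\langle I,V\rangle$ for some open $V\ni\alpha(1)$, and unpacking $\phi(\mu)\in([c_x]_K,U)$ for loops $\mu\subseteq V$ yields the defining property of $K$-SLT at $x$.

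For the forward direction, assume $X$ is $H$-SLT and take a basic whisker open $N=([\alpha]_K,U)$ with $U$ path-connected (by local path connectivity). I show $q^{-1}(N)\subseteq P(X,x)$ is open in the compact-open topology. Given $\beta\in q^{-1}(N)$, replace $\alpha$ by $\alpha\ast\eta$ for a suitable $\eta\subseteq U$ so that WLOG $\beta\sim_K\alpha$. The key step is to apply $H$-SLT at $\beta(1)$---which by hypothesis is $[\beta^{-1}K\beta]$-SLT via the path $\lambda\ast\beta$---to each reversed subpath $\beta|_{[s,1]}^{-1}$ with open $U\ni\beta(1)$, and combine the resulting neighborhoods $V_s\ni\beta(s)$ with compactness of $\beta(I)$, a Lebesgue-number argument on $\{\beta^{-1}(V_s)\}_s$, and local path connectivity to produce a partition $0=t_0<\cdots<t_n=1$ and path-connected opens $W_i$ with $\beta([t_{i-1},t_i])\subseteq W_i\subseteq V_{t_{i-1}}$ and $W_n\subseteq U$. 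For $\beta'\in\mathcal N=\{\beta':\beta'([t_{i-1},t_i])\subseteq W_i\}$, pick paths $\sigma_i\subseteq W_i\cap W_{i+1}$ (with $\sigma_0=c_x$) joining $\beta(t_i)$ to $\beta'(t_i)$ and verify in $\pi_1(X,\beta(1))$ that the loop $[\beta^{-1}\ast\beta'\ast\sigma_n^{-1}]$ decomposes as $\prod_i[\beta|_{[t_{i-1},1]}^{-1}\ast\gamma_i\ast\beta|_{[t_{i-1},1]}]$, where $\gamma_i\subseteq W_i$ is the small loop at $\beta(t_{i-1})$ assembled from $\sigma_{i-1}$, $\beta'|_{[t_{i-1},t_i]}$, $\sigma_i^{-1}$, and $\beta|_{[t_{i-1},t_i]}^{-1}$. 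The SLT property rewrites each factor modulo $[\beta^{-1}K\beta]$ as a loop $\gamma_i'\subseteq U$ at $\beta(1)$, normality of $[\beta^{-1}K\beta]$ collapses the product to a single class $[\gamma']$ with $\gamma'=\gamma_1'\cdots\gamma_n'\subseteq U$, and transferring back to $\pi_1(X,x)$ via the basepoint isomorphism shows $\mu:=\gamma'\ast\sigma_n\subseteq U$ certifies $\beta'\sim_K\beta\ast\mu\sim_K\alpha\ast\mu$, so $\beta'\in q^{-1}(N)$.

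The main obstacle will be the alignment step in the forward direction: the SLT neighborhood $V_{t_{i-1}}$ is determined only after $t_{i-1}$ is fixed, so ensuring each $W_i$ simultaneously covers $\beta([t_{i-1},t_i])$ and lies inside $V_{t_{i-1}}$ demands a careful compactness/refinement argument (producing $V_s$ for all $s$, applying the Lebesgue number lemma to $\{\beta^{-1}(V_s)\}_s$, and possibly refining the partition iteratively to match SLT base points). The essential algebraic input is normality of $K$, which both lets $K$-cosets commute past the conjugated factors and lets the aggregated residual $\gamma'$ be absorbed into a single legitimate modification of $\mu$ while staying inside $U$.
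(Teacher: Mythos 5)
Your proposal is correct and follows essentially the same route as the paper: the forward direction is the paper's argument almost verbatim (cover the path by SLT neighborhoods, decompose $[\beta^{-1}\ast\beta'\ast\sigma_n^{-1}]$ as a product of conjugated small loops, transfer each into $U$ and use normality of $K$ to collapse the product into a single coset $K[\gamma']$), and your converse, though packaged via continuity of $\mu\mapsto[\alpha\ast\mu\ast\alpha^{-1}]_K$ on the loop space rather than via the paper's appeal to Lemma 4.3 of Brodskiy et al.\ on compact-open neighborhoods in the path space, rests on the same idea of pulling a whisker-basic set back along a continuous map into $\tilde{X}_K^{top}$. One small patch is needed: as defined, $\mathcal{N}$ does not guarantee that $\beta(t_i)$ and $\beta'(t_i)$ lie in the same path component of $W_i\cap W_{i+1}$, so the paths $\sigma_i$ you pick there may not exist; you must add the compact-open condition $\beta'(t_i)\in C_i$, where $C_i$ is the path component of $W_i\cap W_{i+1}$ containing $\beta(t_i)$ (open by local path connectedness) --- this is precisely the role played by the sets $V_{s,t}$ in the paper's proof.
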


\begin{proof}





In general, the whisker topology on $ \tilde{X}^{wh}_{H} $ is finer than the compact-open topology on $ \tilde{X}^{top}_{H} $ (see \cite[Remark 6.1]{BrazG}). Hence the whisker topology on $ \tilde{X}^{wh}_{[\lambda^{-1} H\lambda]} $  is finer than the compact-open topology on $ \tilde{X}^{top}_{[\lambda^{-1} H\lambda]} $ for every path $\lambda$ from $x_{0}$ to any $x$. First, we show that $ \tilde{X}^{top}_{H} $ is finer than $ \tilde{X}^{wh}_{H} $ when $X$ is an $H$-SLT space and $H\unlhd \pi_{1}(X,x_{0})$. Let $x\in{X}$ and $\alpha$ be a path in $X$ from $x_{0}$ to $x$. Let $ ([\alpha]_{H},U) $ be a basis open set of $ \tilde{X}_{H}^{wh}$. We show that there exists an open basis $W=\bigcap_{i=1}^{n} \langle k_{i},U_{i}\rangle$ of $P(X,x_{0})$ which contains $\alpha$ such that $q_{H}(W)\subset ([\alpha]_{H},U)$, where $q_{H}:P(X,x_{0})\rightarrow \tilde{X}_{H}$ is the quotient map. For each $t\in{[0,1]}$, let $\alpha_{t}$ be the path from $x_{t}=\alpha(t)$ to $x$ determined by $\alpha$. For each $t\in{[0,1]}$ choose an open path connected neighborhood $V_{t}$ of $x_{t}$ such that for any loop $\beta$ in $V_{t}$ at $x_{t}$ there is a loop $\delta$ in $U$ at $x$ so that $[\alpha_{t}^{-1}\ast\beta\ast\alpha_{t}]_{[\alpha^{-1} H\alpha]}=[\delta]_{[\alpha^{-1} H\alpha]}$.

For each $t\in{I}$ choose a closed subinterval $I_{t}$ of $I$ containing $t$ in its interior so that $\alpha(I_{t})\subset V_{t}$. Choose finitely many of them that cover $I$ so that no proper subfamily cover $I$. Let $S$ be the set of such chosen points $t$. If $I_{s}\cap I_{t}\neq \emptyset$, let $V_{s,t}$ be the path component of $V_{s}\cap V_{t}$ containing $\alpha(I_{s}\cap I_{t})$. Otherwise, $V_{s,t}=\emptyset$. Let $W$ be the set of all paths $\beta$ in $X$ beginning at $x_{0}$ such that $\beta(I_{s}\cap I_{t})\subset V_{s,t}$ for all $s,t\in{S}$. Given $\beta$ in $W$ pick points $x_{s,t}\in{I_{s}\cap I_{t}}$ if $s\neq t$ and $I_{s}\cap I_{t}\neq\emptyset$. Then choose paths $\gamma_{s,t}$ in $X$ from $\alpha(x_{s,t})$ to $\beta(x_{s,t})$. Arrange points $x_{s,t}$ in an increasing sequence $y_{i}$, $1\leqslant i\leqslant n$, put $y_{0}=0$ and $y_{n+1}=1$, and create loops $\lambda_{i}$ in $X$ at $\alpha(y_{i})$ as follows: travel along $\alpha$ from $\alpha(y_{i})$ to $\alpha(y_{i+1})$, then along $\gamma_{i+1}$, then reverse $\beta$ from $\beta(y_{i+1})$ to $\beta(y_{i})$, finally reverse $\gamma_{i}$. Notice $\alpha^{-1}\ast\beta \simeq \prod_{i=1}^{n} \alpha_{i}^{-1}\ast\lambda_{i}\ast\alpha_{i}$, where $\alpha_{i}$ is the path determined by $\alpha$ from $\alpha(y_{i})$ to $x$. Put $K=[\alpha^{-1}H\alpha]$. Since $X$ is an $H$-SLT space, $[ \alpha_{i}^{-1}\ast\lambda_{i}\ast\alpha_{i}]_{K}=[\delta_{i}]_{K}$ i.e. $[\alpha_{i}^{-1}\ast\lambda_{i}\ast\alpha_{i}]\in{K[\delta_{i}]}$ for some loop $\delta_{i}$ at $x$ in $U$, where $i=1,...,n$. By assumption, since $H$ is a normal subgroup of $\pi_{1}(X,x_{0})$, $[\prod_{i=1}^{n} \alpha_{i}^{-1}\ast\lambda_{i}\ast\alpha_{i}]_{K}=[\alpha_{1}^{-1}\ast\lambda_{1}\ast\alpha_{1}\ast\alpha_{2}^{-1}\ast\lambda_{2}\ast\alpha_{2}\ast..........\ast\alpha_{n}^{-1}\ast\lambda_{n}\ast\alpha_{n}]\in{K[\delta]}$, where $\delta=\delta_{1}\ast\delta_{2}\ast...\ast\delta_{n}$. Since $[\alpha^{-1}\ast\beta]_{K}=[\prod_{i=1}^{n} \alpha_{i}^{-1}\ast\lambda_{i}\ast\alpha_{i}]_{K}$, we have $[\alpha^{-1}\ast\beta]_{K}=[\delta]_{K}$, $[\alpha^{-1}\ast\beta\ast\delta^{-1}]\in{[\alpha^{-1}H\alpha]}$ and $[\beta\ast\delta^{-1}\ast\alpha^{-1}]\in{H}$. Therefore $ [\beta]_{H}=[\alpha\ast\delta]_{H} $ which implies that $[\beta]_{H}\in{([\alpha]_{H},U)}$. Thus $ \tilde{X}^{top}_{H} $ is finer than $ \tilde{X}^{wh}_{H} $. Note that since $X$ is an $H$-SLT space, $X$ is a $[\lambda^{-1} H\lambda]$-SLT space for every path $\lambda$ from $x_{0}$ to any $x$ in $X$ where $[\lambda^{-1} H\lambda]$ is a normal subgroup of $ \pi_{1}(X,x_{0}) $. Hence $ \tilde{X}^{top}_{[\lambda^{-1} H\lambda]} $ is finer than $ \tilde{X}^{wh}_{[\lambda^{-1} H\lambda]} $.

Conversely, pick $x\in{X}$ and let $\delta:I\rightarrow X$ be a path from $x_{0}$ to $x$, $\sigma:I\rightarrow X$ be a path from $x$ to $y$ and $U$ be an open neighborhood in $X$ at $x$. By assumption, $ \tilde{X}^{wh}_{[\lambda^{-1}H\lambda]}=\tilde{X}^{top}_{[\lambda^{-1}H\lambda]} $ for every path $\delta\rightarrow X$ with $\delta(0)=x_{0}$ and $\delta(1)=x$.
Put $K=[\sigma^{-1}\ast\delta^{-1}H\delta\ast\sigma]$. We know that the quotient map $\pi:P(X,y)\rightarrow \tilde{X}^{wh}_{K}=\tilde{X}^{top}_{K}$ is continuous, typically $\pi(\sigma^{-1})=[\sigma^{-1}]_{K}$. Take the open basis $([\alpha^{-1}]_{K},U)$ of $\tilde{X}^{wh}_{K}$. By the continuity of the map $\pi$, $\pi^{-1}([\sigma^{-1}]_{K},U)$ is an open subset of $P(X,y)$. According to \cite[Lemma 4.3]{BroU}, there are open subsets $y\in{V}$, $x\in{W}$ and $([\sigma^{-1}]_{K},V,W)$ such that $([\sigma^{-1}]_{K},V,W)\subseteq ([\sigma^{-1}]_{K},U)$. Therefore, for every loop $\beta$ in $V$ based at $y$ there is a loop $\gamma$ in $U$ based at $x$ such that $[\sigma\ast\beta\ast\sigma^{-1}]_{[\delta^{-1}H\delta]}=[\gamma]_{[\delta^{-1}H\delta]}$ which implies that $X$ is an $H-$SLT space.
\end{proof}
The following corollary is a direct consequence of Theorem 3.1.
\begin{corollary}
Let $ H $ be any normal subgroup of $ \pi_{1}(X,x_{0}) $ and let $ X $ be a locally path connected space. Then $X$ is an $H$-SLT at $x_{0}$ if and only if $ (p^{-1}_H(x_0))^{top}=(p^{-1}_H(x_0))^{wh} $, or equivalently, $\frac{\pi_{1}(X,x_{0})^{wh}}{H}=\frac{\pi_{1}(X,x_{0})^{qtop}}{H}$.
\end{corollary}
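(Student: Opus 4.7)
\noindent\emph{Proof plan.} The plan is to specialize the argument of Theorem 3.1 to the subspaces $(p_{H}^{-1}(x_0))^{top}$ and $(p_{H}^{-1}(x_0))^{wh}$, which live above loops at $x_0$ rather than above arbitrary paths. Since the whisker topology on $\tilde{X}_{H}$ is always finer than the quotient compact-open topology, the same holds after restriction to the fiber $p_{H}^{-1}(x_0)$, so each direction reduces to proving a one-sided refinement; the equivalent formulation in terms of $\pi_{1}^{wh}(X,x_{0})/H$ and $\pi_{1}^{qtop}(X,x_{0})/H$ then follows immediately from the standard identifications of these quotients with the fiber topologies.

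For the forward direction, I would assume $X$ is $H$-SLT at $x_{0}$ and mimic the forward construction of Theorem 3.1 in the special case where the path $\alpha$ is a loop at $x_{0}$. Given a whisker basic open $([\alpha]_{H}, U) \cap p_{H}^{-1}(x_0)$ with $\alpha$ a loop at $x_0$ and $U \ni x_{0}$, for each $t \in [0,1]$ I would choose an open path connected neighborhood $V_{t}$ of $x_{t} = \alpha(t)$ so that loops in $V_{t}$ at $x_{t}$ transfer, modulo $[\alpha^{-1} H \alpha]$, to loops in $U$ at $\alpha(1) = x_{0}$. The key observation is that, since $\alpha$ is a loop at $x_{0}$ and $H \unlhd \pi_{1}(X, x_{0})$, the subgroup $[\alpha^{-1} H \alpha]$ coincides with $H$, so the SLT input needed is precisely $H$-SLT at $x_{0}$ applied to the path $\alpha_{t}^{-1}$ from $x_{0}$ to $x_{t}$, which is our hypothesis. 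The construction then produces a compact-open basic open $W \subset P(X, x_{0})$ with $q_{H}(W) \subseteq ([\alpha]_{H}, U)$; intersecting $W$ with the loop space gives the required compact-open neighborhood in the fiber.

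For the converse, I would assume $(p_{H}^{-1}(x_0))^{wh} = (p_{H}^{-1}(x_0))^{top}$ and verify $H$-SLT at $x_{0}$ directly. Given a path $\alpha$ from $x_{0}$ to $x$ and an open $U \ni x_{0}$, the whisker basic open $N := ([c_{x_{0}}]_{H}, U) \cap p_{H}^{-1}(x_0)$ is open in $(p_{H}^{-1}(x_0))^{top}$ by hypothesis, so its preimage $\Omega_{0}$ under the quotient map $q_{H} : \Omega(X, x_{0}) \to p_{H}^{-1}(x_0)$ is open in the compact-open topology on $\Omega(X, x_{0})$. The concatenation map $\Phi : \Omega(X, x) \to \Omega(X, x_{0})$, $\Phi(\beta) = \alpha \ast \beta \ast \alpha^{-1}$, is continuous, and $\Phi(c_{x})$ has $H$-class $[c_{x_{0}}]_{H}$, so $\Phi(c_{x}) \in \Omega_{0}$; hence $\Phi^{-1}(\Omega_{0})$ is an open neighborhood of $c_{x}$ in $\Omega(X, x)$. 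Choosing a compact-open basic neighborhood $\bigcap_{j=1}^{m} \langle L_{j}, W_{j} \rangle \subseteq \Phi^{-1}(\Omega_{0})$ of $c_{x}$ (with $x \in W_{j}$ for each $j$) and setting $V = \bigcap_{j=1}^{m} W_{j}$, every loop $\beta$ in $V$ at $x$ lies in this neighborhood, whence $\alpha \ast \beta \ast \alpha^{-1} \in \Omega_{0}$, so $[\alpha \ast \beta \ast \alpha^{-1}]_{H} = [\gamma]_{H}$ for some loop $\gamma$ in $U$ at $x_{0}$, which is exactly the $H$-SLT at $x_{0}$ condition.

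The main obstacle is the forward direction: one must verify that restricting Theorem 3.1's construction to loops at $x_{0}$ really does reduce the required SLT input to $H$-SLT at $x_{0}$ only. Normality of $H$ is used essentially here, to collapse the conjugate $[\alpha^{-1} H \alpha]$ to $H$. The converse is a routine compact-open continuity argument and presents no significant difficulty.
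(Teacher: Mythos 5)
Your proposal is correct and follows the paper's route: the paper proves this corollary simply by pointing at Theorem 3.1, and your write-up is precisely the localization of that theorem's proof to the fiber over $x_{0}$, correctly identifying that normality of $H$ collapses $[\alpha^{-1}H\alpha]$ to $H$ for loops $\alpha$ at $x_{0}$ so that only the $H$-SLT-at-$x_{0}$ hypothesis is needed. Your converse, phrased via continuity of the conjugation map $\Omega(X,x)\to\Omega(X,x_{0})$, is just a self-contained rendering of the paper's appeal to the compact-open basis argument (Lemma 4.3 of Brodskiy et al.), so the two arguments are essentially the same.
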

Fischer and Zastrow \cite[Lemma 2.1]{Zastrow} showed that the topology of $\tilde{X}^{wh}_{e}$ is finer than the topology of $\tilde{X}^{top}_{e}$. Since the fundamental group $\pi_{1}(X,x_{0})$ is equal to the subset of all classes of
loops in the set $\tilde{X}_{e}$, one can check that the proof of \cite[Lemma 2.1]{Zastrow} holds also for  $ \pi_{1}(X,x_{0}) $. In other words, we can conclude from the proof of \cite[Lemma 2.1]{Zastrow} that $\pi_{1}^{wh}(X,x)$ and $\pi_{1}^{qtop}(X,x)$ are agree when $X$ is semilocally simply connected. Note that Virk and Zastrow \cite{VZcom} illustrated that these two topologies does not agree, in general. Also, Brodskiy et al. in \cite[Theorem 4.12]{BroU} state the equivalent condition for this coincidence.

Using Corollary 3.2, we can conclude one of the main result of this section.

\begin{corollary}
Let $ H $ be any normal subgroup of $ \pi_{1}(X,x_{0}) $ and $ X $ be a locally path connected $H-$SLT space at $x_{0}$. Then any subset $U$ of $\pi_{1}(X,x_{0})$ containing $H$ is open in $\pi_{1}^{wh}(X,x_{0})$ if and only if it is open in $\pi_{1}^{qtop}(X,x_{0})$.
\end{corollary}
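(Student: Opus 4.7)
The plan is to deduce Corollary~3.3 directly from the preceding Corollary~3.2, which under our hypotheses identifies the quotient topological spaces $\pi_{1}^{wh}(X,x_{0})/H$ and $\pi_{1}^{qtop}(X,x_{0})/H$. One direction is immediate and does not even use the hypothesis $H\subseteq U$: by the Fischer--Zastrow fact \cite[Lemma 2.1]{Zastrow} recalled in the paragraph preceding the statement, the whisker topology on $\pi_{1}(X,x_{0})$ is finer than the compact-open topology, so every $\pi_{1}^{qtop}$-open set is automatically $\pi_{1}^{wh}$-open.

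For the converse, I would argue through the quotient map $q\colon \pi_{1}(X,x_{0})\to \pi_{1}(X,x_{0})/H$. Since $H$ is normal, the fibers of $q$ are precisely the cosets of $H$, and the condition that $U$ contains $H$, read in the natural coset-theoretic sense, amounts to $U=q^{-1}(q(U))$. By the definition of the quotient topology, $U$ is open in $\pi_{1}^{wh}(X,x_{0})$ if and only if $q(U)$ is open in $\pi_{1}^{wh}(X,x_{0})/H$, and the analogous equivalence holds with $qtop$ in place of $wh$. Corollary~3.2 asserts that these two quotient topologies coincide, so openness of $q(U)$ is independent of which side we start from, and pulling back via $q$ transfers the equivalence to $U$ itself.

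The one delicate point worth flagging, and the main obstacle I would want to handle explicitly in the write-up, is the interpretation of the phrase \emph{subset containing $H$}: the quotient argument really needs $U$ to be a union of cosets of $H$, and this is not automatic for an arbitrary $\pi_{1}^{wh}$-open set that merely contains $H$ as a subset. The assumption is satisfied automatically when $U$ is a subgroup of $\pi_{1}(X,x_{0})$, which is the case emphasized in the introduction and in the companion non-normal statement; it is also the natural convention when one is effectively working in $\pi_{1}(X,x_{0})/H$. Once this reading is fixed, no further technical input is required beyond Corollary~3.2: the role of local path-connectedness and of the $H$-SLT property has already been absorbed into that corollary via Theorem~3.1, and they enter Corollary~3.3 only through it.
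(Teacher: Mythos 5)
Your argument is essentially the paper's: the authors give no separate proof of Corollary 3.3 beyond the remark ``Using Corollary 3.2,'' and the intended derivation is exactly the quotient-map transfer you describe, with the easy direction coming from the whisker topology being finer than the compact-open topology. The saturation caveat you raise is genuine --- the deduction from Corollary 3.2 only applies to subsets $U$ with $HU=U$, and the paper's statement (and implicit proof) shares this imprecision, so flagging it explicitly is the right call.
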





It is easy to see that the property of being small loop transfer for $X$ is a necessary condition for openness of the trivial subgroup $H=1$ in $\pi_{1}^{qtop}(X,x_{0})$. In the following proposition we extend this result to $H$-SLT spaces.

\begin{proposition}
Let $X$ be locally path connected and $H$ be an open subgroup of $\pi_{1}^{qtop}(X,x_{0})$, then $X$ is an $H$-SLT space.
\end{proposition}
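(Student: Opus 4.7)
The plan is to verify the $[\lambda^{-1}H\lambda]$-SLT condition at every point $x$, for every path $\lambda$ from $x_{0}$ to $x$, by taking the witnessing loop $\gamma$ in $U$ to be the constant loop $c_{x}$. Fix $x$, $\lambda$, a path $\alpha$ with $\alpha(0)=x$ and $\alpha(1)=y$, and an open neighborhood $U$ of $x$. Set $\mu=\lambda\ast\alpha$. Then the desired equality $[\alpha\ast\beta\ast\alpha^{-1}]_{[\lambda^{-1}H\lambda]}=[c_{x}]_{[\lambda^{-1}H\lambda]}$ amounts to $[\alpha\ast\beta\ast\alpha^{-1}]\in[\lambda^{-1}H\lambda]$, which is in turn equivalent to $[\mu\ast\beta\ast\mu^{-1}]\in H$. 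It therefore suffices to produce an open neighborhood $V$ of $y$ such that $[\mu\ast\beta\ast\mu^{-1}]\in H$ for every loop $\beta$ in $V$ based at $y$.

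The second step is to pull the openness of $H$ back to a neighborhood of $c_{y}$ in the loop space. Let $q:\Omega(X,x_{0})\rightarrow \pi_{1}^{qtop}(X,x_{0})$ be the canonical quotient map, where $\Omega(X,x_{0})\subseteq P(X,x_{0})$ carries the compact-open topology; by definition of $\pi_{1}^{qtop}$, openness of $H$ makes $q^{-1}(H)$ open in $\Omega(X,x_{0})$. The conjugation map $\Phi_{\mu}:\Omega(X,y)\rightarrow \Omega(X,x_{0})$ given by $\Phi_{\mu}(\beta)=\mu\ast\beta\ast\mu^{-1}$ is continuous between compact-open topologies (concatenation with fixed paths is continuous, as used implicitly in the proof of Theorem~3.1). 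Hence $(q\circ\Phi_{\mu})^{-1}(H)$ is open in $\Omega(X,y)$ and contains $c_{y}$, since $[\mu\ast c_{y}\ast\mu^{-1}]=1\in H$.

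The final step is to extract $V$ from this data. A basic compact-open neighborhood of $c_{y}$ has the form $\bigcap_{i=1}^{n}\langle K_{i},W_{i}\rangle$ with $K_{i}\subseteq I$ compact and $W_{i}\subseteq X$ open; pick such a neighborhood inside $(q\circ\Phi_{\mu})^{-1}(H)$. The condition $c_{y}(K_{i})\subseteq W_{i}$ forces $y\in W_{i}$ for each $i$, so $V:=\bigcap_{i=1}^{n}W_{i}$ is an open neighborhood of $y$ in $X$ (refined to a path-connected one via local path connectedness if convenient). For any loop $\beta$ in $V$ based at $y$ one has $\beta(K_{i})\subseteq V\subseteq W_{i}$, hence $\beta\in\bigcap_{i=1}^{n}\langle K_{i},W_{i}\rangle$, which gives $[\mu\ast\beta\ast\mu^{-1}]\in H$ as required. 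Since $x$ and $\lambda$ were arbitrary, $X$ is an $H$-SLT space. I do not anticipate any substantive obstacle: the only mildly delicate item is the continuity of $\Phi_{\mu}$ in the compact-open topology, which is routine, and local path connectedness enters only cosmetically, not as a genuine ingredient.
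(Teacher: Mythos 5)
Your proof is correct, and it shares with the paper the key reduction: take the witnessing loop $\gamma$ to be the constant loop $c_{x}$, so that the $[\lambda^{-1}H\lambda]$-SLT condition collapses to the single membership statement $[\lambda\ast\alpha\ast\beta\ast\alpha^{-1}\ast\lambda^{-1}]\in H$. Where you diverge is in how the neighborhood $V$ is produced. The paper invokes Corollary 3.3 of Torabi--Pakdaman--Mashayekhy, which (for locally path connected $X$) characterizes open subgroups of $\pi_{1}^{qtop}(X,x_{0})$ as exactly those containing a path Spanier group $\tilde{\pi}(\mathcal{V},x_{0})$; the member $V$ of that path open cover associated to the path $\delta\ast\alpha$ then does the job in one line. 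You instead unwind the quotient topology directly: $q^{-1}(H)$ is open in the loop space, conjugation by the fixed path $\mu=\lambda\ast\alpha$ is compact-open continuous, and a basic neighborhood $\bigcap_{i}\langle K_{i},W_{i}\rangle$ of $c_{y}$ inside $(q\circ\Phi_{\mu})^{-1}(H)$ yields $V=\bigcap_{i}W_{i}$, since any loop with image in $V$ automatically lies in every $\langle K_{i},W_{i}\rangle$. Your route is self-contained and more elementary, and it exposes the fact that local path connectedness is not genuinely needed for this implication --- it is a hypothesis of the cited Spanier-group characterization rather than of the underlying argument. The paper's route is shorter on the page and keeps the proposition aligned with the Spanier-group machinery used throughout Section 3.
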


\begin{proof}
Let $x\in{X}$ and $\delta:I\rightarrow X$ be a path from $x_{0}$ to $x$. Consider $\alpha$ a path from $x$ to $y$ and $U$ an open neighbourhood of $x$. Using \cite[Corollary 3.3]{TorabiS} and hypotheses, there is an open set $V$ of $y$ such that $[\delta\ast\alpha]V[\alpha^{-1}\ast\delta^{-1}]\leq H$, i.e., for every loop $\beta$ in $V$ based at $y$ we have $[\delta\ast\alpha\ast\beta\ast\alpha^{-1}\ast\delta^{-1}]\in{H}$ or equivalently $[\alpha\ast\beta\ast\alpha^{-1}]\in{[\delta^{-1}H\delta]}$. Hence $[ \alpha\ast\beta\ast\alpha^{-1}]_{[\delta^{-1}H\delta]}=[c_{x}]_{[\delta^{-1}H\delta]} $ which implies that $X$ is an $H-$SLT space.
\end{proof}

\begin{proposition}
Let $H\leq \pi_{1}(X,x_{0})$ and $X$ be a locally path connected $H-$SLT at $x_{0}$. Then $H$ is an open subgroup of $\pi_{1}^{wh}(X,x_{0})$ if and only if $H$ is an open subgroup of $\pi_{1}^{qtop}(X,x_{0})$.
\end{proposition}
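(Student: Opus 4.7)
The plan is to prove the two directions separately, with the forward implication being immediate and the converse hinging on a suitable local characterization of openness in $\pi_1^{qtop}$.

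For the easy direction, I would invoke the fact (recalled in the excerpt from \cite{VZcom}) that the whisker topology on $\pi_1(X,x_0)$ is finer than the compact-open topology. Consequently, any subgroup that is open in $\pi_1^{qtop}(X,x_0)$ is automatically open in $\pi_1^{wh}(X,x_0)$, requiring no use of local path connectedness or the $H$-SLT hypothesis.

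For the nontrivial direction, suppose $H$ is open in $\pi_1^{wh}(X,x_0)$. Unpacking the definition of the whisker topology, there exists an open neighborhood $U$ of $x_0$ (which I may take to be path connected since $X$ is locally path connected) such that the basic neighborhood $([c_{x_0}],U)\cap p_e^{-1}(x_0)$ is contained in $H$; equivalently, $i_\ast\pi_1(U,x_0)\subseteq H$. My plan is then to verify the local characterization from \cite[Corollary 3.3]{TorabiS} used in Proposition 3.4: namely, that $H$ is open in $\pi_1^{qtop}(X,x_0)$ if and only if for every $x\in X$ and every path $\delta$ from $x_0$ to $x$ there exists an open neighborhood $V$ of $x$ with $[\delta]\,i_\ast\pi_1(V,x)\,[\delta^{-1}]\subseteq H$.

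To produce such a $V$, I fix $x$ and a path $\delta$ from $x_0$ to $x$ and apply the $H$-SLT property at $x_0$ to $\delta$ and the open set $U$. This furnishes an open neighborhood $V$ of $x$ such that for every loop $\beta$ in $V$ based at $x$ there is a loop $\gamma$ in $U$ based at $x_0$ with $[\delta\ast\beta\ast\delta^{-1}]_H=[\gamma]_H$. By the definition of $\sim_H$, this means $[\delta\ast\beta\ast\delta^{-1}\ast\gamma^{-1}]\in H$, hence $[\delta\ast\beta\ast\delta^{-1}]\in H[\gamma]$. Since $\gamma$ is a loop in $U$ at $x_0$, the hypothesis $i_\ast\pi_1(U,x_0)\subseteq H$ gives $[\gamma]\in H$, and therefore $H[\gamma]=H$. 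Consequently $[\delta\ast\beta\ast\delta^{-1}]\in H$ for every loop $\beta$ in $V$ at $x$, which is exactly the desired inclusion $[\delta]\,i_\ast\pi_1(V,x)\,[\delta^{-1}]\subseteq H$.

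The potential obstacle I would watch is the fact that $H$ is not assumed to be normal, which rules out the zig-zag/conjugation arguments used in Theorem 3.1 and Corollary 3.3. The saving observation is that the $H$-SLT equality $[\delta\ast\beta\ast\delta^{-1}]_H=[\gamma]_H$ lands in the \emph{left} coset $H[\gamma]$, and left cosets by elements of $H$ collapse to $H$ itself; thus no normality is required. Assembling the two directions and quoting the local characterization of \cite[Corollary 3.3]{TorabiS} completes the proof.
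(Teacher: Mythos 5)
Your proposal is correct and follows essentially the same route as the paper: the easy direction from the whisker topology being finer, and for the converse, extracting $i_\ast\pi_1(U,x_0)\leq H$ from openness in $\pi_1^{wh}$, applying the $H$-SLT at $x_0$ condition to each path to build a path Spanier group contained in $H$ (using that $H[\gamma]=H$ when $[\gamma]\in H$, so normality is not needed), and concluding via \cite[Corollary 3.3]{TorabiS}. No substantive differences.
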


\begin{proof}
Let $H$ be an open subgroup of $\pi_{1}^{wh}(X,x_{0})$. According to the definition of the whisker topology, there is an open basis $i_{\ast}\pi_{1}(U,x_{0})$ such that  $i_{\ast}\pi_{1}(U,x_{0}) \leq H$. On the other hand, since $X$ is an $H-$SLT at $x_{0}$, there is  a path open cover $\mathcal{V}=\lbrace V_{\alpha} \vert \alpha\in{P(X,x_{0})} \rbrace$ such that for any path $\alpha$ and for every loop $\beta$ in $V_{\alpha}$, there is a loop $\delta$ in $U$ based at $x_{0}$ such that $[\alpha\ast\beta\ast\alpha^{-1}]_{H}=[\delta]_{H}$ or equivalently, $[\alpha\ast\beta\ast\alpha^{-1}]\in{H[\delta]}$. Since $i_{\ast}\pi_{1}(U,x_{0})\leq H$, we have $[\alpha\ast\beta\ast\alpha^{-1}]\in{H}$. Thus we obtain a path Spanier group of the form $\tilde{\pi}(\mathcal{V},x_{0})$ such that $\tilde{\pi}(\mathcal{V},x_{0})\leq H$ (see \cite{TorabiS}). Therefore, Corollary 3.3 of \cite{TorabiS} implies that $H$ is an open subgroup of $\pi_{1}^{qtop}(X,x_{0})$.
\end{proof}



The following remark help us to determine the form of some semicovering subgroups in SLT spaces relative to subgroups .
\begin{remark}
Let $H\leq K\leq \pi_{1}(X,x_{0})$. It follows directly of Definition 2.2 that all $H$-SLT at $x_{0}$ spaces are $K-$SLT at $x_{0}$. Therefore, using Proposition 3.5, if $X$ is $H$-SLT at $x_{0}$, then $K$ is an open subgroup of $\pi_{1}^{wh}(X,x_{0})$ if and only if $K$ is an open subgroup of $\pi_{1}^{qtop}(X,x_{0})$ .
\end{remark}

Although the fibers in semicovering spaces are discrete but this result does not hold in $\textbf{lpc}_{0}$-covering spaces because the fibers are not necessarily homeomorphic. We verify that the fibers in $\textbf{lpc}_{0}$-covering spaces are Hausdorff. At first, in the following lemma, we show that the property of homotopically Hausdorff relative to $H$ has a significant influence to the fibers of the endpoint projection $p_{H}$.

\begin{lemma}
Let $X$ be path connected and $H$ be a subgroup of $\pi_{1}(X,x_{0})$. Then $X$ is homotopically Hausdorff relative to $H$ if and only if $(p_{H}^{-1}(x))^{wh}$ is Hausdorff for every $x\in{X}$.
\end{lemma}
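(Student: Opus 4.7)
The plan is to prove both directions by working directly with the basic open neighborhoods in the whisker topology on $\tilde{X}^{wh}_H$, restricted to the fiber: $([\alpha]_H, U)\cap p_H^{-1}(x) = \{[\alpha\ast\gamma]_H : \gamma \text{ is a loop in } U \text{ at } x\}$. The key move is to translate disjointness of two such basic neighborhoods into the coset-type statements that appear in the definition of homotopically Hausdorff relative to $H$.

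For the ``$\Rightarrow$'' direction, I would take two distinct points $[\alpha]_H \neq [\beta]_H$ in $p_H^{-1}(x)$ and set $g = [\beta\ast\alpha^{-1}]$; since $H$ is a subgroup and $[\alpha\ast\beta^{-1}]\notin H$, also $g = [\alpha\ast\beta^{-1}]^{-1}\notin H$. Applying the hypothesis to the path $\alpha$ and the element $g$ produces an open neighborhood $U$ of $x$ such that no loop $\gamma$ in $U$ based at $x$ satisfies $[\alpha\ast\gamma\ast\alpha^{-1}]\in Hg$. It then remains to verify that $([\alpha]_H,U)$ and $([\beta]_H,U)$ are disjoint in $(p_H^{-1}(x))^{wh}$: if $[\alpha\ast\gamma_1]_H = [\beta\ast\gamma_2]_H$ for loops $\gamma_1,\gamma_2$ in $U$ at $x$, then $[\alpha\ast\gamma_1\ast\gamma_2^{-1}\ast\beta^{-1}]\in H$, and right-multiplying by $[\beta\ast\alpha^{-1}]$ yields $[\alpha\ast(\gamma_1\ast\gamma_2^{-1})\ast\alpha^{-1}]\in Hg$, contradicting the choice of $U$ with the loop $\gamma = \gamma_1\ast\gamma_2^{-1}$.

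For the ``$\Leftarrow$'' direction, I would fix a path $\alpha$ from $x_0$ to $x$ and $g = [\delta]\notin H$, where $\delta$ is a loop at $x_0$. The points $[\alpha]_H$ and $[\delta\ast\alpha]_H$ of $p_H^{-1}(x)$ are distinct, because otherwise $[\alpha\ast\alpha^{-1}\ast\delta^{-1}] = [\delta^{-1}]\in H$, forcing $g\in H$. By the assumed Hausdorffness of $(p_H^{-1}(x))^{wh}$, I can choose disjoint basic neighborhoods which, after intersecting their two defining open sets, take the form $([\alpha]_H,U)$ and $([\delta\ast\alpha]_H,U)$ for a common $U$. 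If some loop $\gamma$ in $U$ at $x$ satisfied $[\alpha\ast\gamma\ast\alpha^{-1}]\in Hg = H[\delta]$, then $[\alpha\ast\gamma\ast\alpha^{-1}\ast\delta^{-1}]\in H$, which rearranges to $[\alpha\ast\gamma]_H = [\delta\ast\alpha]_H$, placing this class in both neighborhoods, a contradiction.

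The only real care needed is the bookkeeping between membership in the right coset $Hg$ and equality of two $H$-classes of paths ending at $x$; the argument uses only that $H$ is closed under inverses and products, which is why, unlike many results in this section, no normality hypothesis on $H$ is required. Consequently I do not foresee any serious obstacle beyond this translation.
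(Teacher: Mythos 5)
Your proof is correct and takes essentially the same approach as the paper: both directions translate disjointness of whisker-basic neighborhoods $([\alpha]_H,U)$, $([\beta]_H,U)$ in the fiber into the coset condition $[\alpha\ast\gamma\ast\alpha^{-1}]\in Hg$, with the same choice of witnesses ($g=[\beta\ast\alpha^{-1}]$ in one direction, the pair $[\alpha]_H$, $[\delta\ast\alpha]_H$ in the other). The only cosmetic difference is that the paper invokes the homotopically Hausdorff property at both $\alpha$ and $\beta$ before intersecting, whereas you correctly observe that applying it once at $\alpha$ suffices.
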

\begin{proof}
Let $X$ be a homotopically Hausdorff relative to $H$ and pick $x\in{X}$. We show $p_{H}^{-1}(x)$ is Hausdorff. Consider $[\alpha]_{H}$ and $[\beta]_{H}$ belong to $p_{H}^{-1}(x)$, where $\alpha$ and $\beta$ are paths from $x_{0}$ to $x=\alpha(1)=\beta(1)$ such that $[\alpha\ast\beta^{-1}]\notin{H}$. Since $X$ is  homotopically Hausdorff relative to $H$, there is a neighborhood $U$ of $x$ in $X$ such that neither $[\alpha\ast\delta\ast\alpha^{-1}]\in H[\alpha\ast\beta^{-1}]$ nor $[\beta\ast\delta\ast\beta
^{-1}]\in H[\alpha\ast\beta^{-1}]$ for any loop $\delta$ based at $x$ in $U$. We show that $([\beta]_{H},U)\cap ([\alpha]_{H},U)=\emptyset$. By contrary, suppose $[\beta\ast\delta]_{H}=[\alpha\ast\lambda]_{H}$ for some loop $\delta$ and $\lambda$ in $U$ based at $x$. So, we have $[\beta\ast\delta\ast\lambda^{-1}\ast\alpha^{-1}]\in{H}$, or equivalently $[\beta\ast\delta\ast\lambda^{-1}\ast\beta^{-1}\ast\beta\ast\alpha^{-1}]\in{H}$ which implies that $[\beta\ast\delta\ast\lambda^{-1}\ast\beta^{-1}]\in H[\alpha\ast\beta^{-1}]$. This is a contradiction.

Conversely, pick $x\in{X}$ and let $p_{H}^{-1}(x)$ be Hausdorff. Let $[\lambda]\notin{H}$ and $\alpha$ be a path from $x_{0}$ to $x$. Since $[\lambda]\notin{H}$, $[\lambda\ast\alpha]_{H}\neq [\alpha]_{H}$. By assumption, there are neighborhoods $U$ and $V$ such that $([\lambda\ast\alpha]_{H},U)\cap ([\alpha]_{H},V)=\emptyset$. Put $W=U\cap V$. We can see that $[\alpha]_{H}\neq [\lambda\ast\alpha\ast\delta^{-1}]_{H}$ for every loop $\delta$ based at $x$ in $W$. Therefore, $[\alpha\ast\delta\ast\alpha^{-1}\ast\lambda^{-1}]\notin{H}$ and thus there is no loop  $\delta$ in $U$ such that $[\alpha\ast\delta\ast\alpha^{-1}]\in{H[\lambda]}$ which implies that $X$ is homotopically Hausdorff relative to $H$.
\end{proof}
Since every $\textbf{lpc}_{0}$-covering map is equivalent to a certain endpoint map, the following corollary will be implied from the above lemma.

\begin{corollary}
If $p:\tilde{X}\rightarrow X$ is $\textbf{lpc}_{0}$-covering map with $p_{\ast}\pi_{1}(\tilde{X},\tilde{x}_{0})=H\leq \pi_{1}(X,x_{0})$, then the fibers of $p$ are Hausdorff.
\end{corollary}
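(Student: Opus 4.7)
The plan is to combine Lemma 3.7 with the identification of $\textbf{lpc}_0$-covering maps as endpoint projections. Concretely, since $p:\tilde{X}\rightarrow X$ is an $\textbf{lpc}_0$-covering map with $p_{\ast}\pi_{1}(\tilde{X},\tilde{x}_{0})=H$, the result of Brazas (\cite[Lemma 5.10]{BrazG}) cited earlier tells us that $p$ is equivalent (over $X$) to the endpoint projection $p_{H}:\tilde{X}_{H}^{wh}\rightarrow X$. In particular, for each $x\in X$ the fiber $p^{-1}(x)$ is homeomorphic to the fiber $(p_{H}^{-1}(x))^{wh}$, so it is enough to verify that the latter is Hausdorff.

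Next, by \cite[Lemma 5.9]{BrazG}, an $\textbf{lpc}_{0}$-covering map is characterized by the unique path lifting property, so $p_{H}$ has the unique path lifting property. Applying Proposition 1.3(i) then yields that $X$ is homotopically Hausdorff relative to $H$. At this point Lemma 3.7 immediately gives that $(p_{H}^{-1}(x))^{wh}$ is Hausdorff for every $x\in X$, and transporting this along the fiberwise homeomorphism from the first step concludes the proof.

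The argument is essentially a one-line chain of implications once the right references are in place, so no genuine obstacle is expected; the only thing to check carefully is that the equivalence between $p$ and $p_{H}$ from \cite[Lemma 5.10]{BrazG} is a homeomorphism over $X$ (and in particular restricts to homeomorphisms on fibers), so that the Hausdorff property transfers from $(p_{H}^{-1}(x))^{wh}$ to $p^{-1}(x)$. Given that this is precisely how that equivalence is formulated, the corollary follows directly.
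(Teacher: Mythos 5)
Your argument is correct and follows exactly the route the paper intends: it identifies $p$ with the endpoint projection $p_{H}$ via \cite[Lemma 5.10]{BrazG}, deduces that $X$ is homotopically Hausdorff relative to $H$ from the unique path lifting property, and then applies Lemma 3.7. The paper compresses this into a one-sentence remark before the corollary; your version merely spells out the same chain of implications.
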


In the following corollary, we present some conditions which make $\textbf{lpc}_{0}$-covering map to be a semicovering map.

\begin{corollary}
Let $p:\tilde{X}\rightarrow X$ be a $\textbf{lpc}_{0}$-covering map with  $p_{\ast}\pi_{1}(\tilde{X},\tilde{x}_{0})=H\leq \pi_{1}(X,x_{0})$ and $X$ be $H$-SLT at $x_{0}$. If $\vert p^{-1}(x_{0}) \vert < \infty$, then $p$ is a semicovering map.
\end{corollary}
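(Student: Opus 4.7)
The plan is to reduce the problem to showing that $H$ is an open subgroup of $\pi_{1}^{qtop}(X,x_{0})$, and then to invoke the Fischer--Zastrow criterion recalled in Section 2, which guarantees that openness of $H$ in $\pi_{1}^{qtop}$ forces $p_{H}$ to be a semicovering. Since $p$ is equivalent to $p_{H}$ by \cite[Lemma 5.10]{BrazG}, this yields the conclusion.

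The first move is to exploit the finite fiber hypothesis. By Corollary 3.8, the fiber $p^{-1}(x_{0})$ is a Hausdorff subspace of $\tilde{X}$. A finite Hausdorff space is discrete, so $p^{-1}(x_{0})$ is discrete. Transporting this across the equivalence $p\simeq p_{H}$, the fiber $(p_{H}^{-1}(x_{0}))^{wh}$ is discrete as well. In particular, the singleton $\{[c_{x_{0}}]_{H}\}$ is open, so there is some open $U\ni x_{0}$ with
$$([c_{x_{0}}]_{H},U)\cap p_{H}^{-1}(x_{0})=\{[c_{x_{0}}]_{H}\}.$$
Unpacking the whisker basis, for every loop $\beta$ in $U$ based at $x_{0}$ one has $[\beta]_{H}=[c_{x_{0}}]_{H}$, i.e.\ $[\beta]\in H$. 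Thus $i_{\ast}\pi_{1}(U,x_{0})\subseteq H$, which says precisely that $H$ is an open subgroup of $\pi_{1}^{wh}(X,x_{0})$.

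The second move is to upgrade this from the whisker topology to the quotient of the compact-open topology. Here the hypothesis that $X$ is $H$-SLT at $x_{0}$ (together with the local path connectedness built into the $\textbf{lpc}_{0}$-covering framework) enters decisively, via Proposition 3.5: under these assumptions, open subgroups of $\pi_{1}^{wh}(X,x_{0})$ and of $\pi_{1}^{qtop}(X,x_{0})$ coincide. Consequently $H$ is open in $\pi_{1}^{qtop}(X,x_{0})$, and the Fischer--Zastrow criterion cited before Corollary 2.8 applies to show that $p_{H}$, and hence $p$, is a semicovering.

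The main obstacle is the bridge in the first step: translating the abstract assertion that $p^{-1}(x_{0})$ is discrete as a subspace of $\tilde{X}$ into the concrete statement that a basic whisker neighborhood of $[c_{x_{0}}]_{H}$ meets the fiber only at the basepoint. This is not difficult once one carefully invokes \cite[Lemma 5.10]{BrazG} to identify $p^{-1}(x_{0})$ with $(p_{H}^{-1}(x_{0}))^{wh}$, but it is the place where the equivalence of coverings must be used as a topological identification and not merely a set-theoretic one; the rest of the argument is then a straightforward chaining of Corollary 3.8, Proposition 3.5, and the semicovering criterion of Fischer--Zastrow.
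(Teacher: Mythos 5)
Your proposal is correct and follows essentially the same route as the paper: Corollary 3.8 gives Hausdorff fibers, finiteness gives discreteness (hence $H$ open in $\pi_{1}^{wh}(X,x_{0})$), Proposition 3.5 transfers openness to $\pi_{1}^{qtop}(X,x_{0})$, and the Fischer--Zastrow criterion yields the semicovering. The only difference is that you spell out the whisker-basis argument identifying discreteness of the fiber with openness of $H$, which the paper leaves implicit.
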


\begin{proof}
It is easy to see that every finite subset of a Hausdorff space is discrete. Therefore, using Corollary 3.8, the fiber of $p$ which is homeomorphism to $\frac{\pi_{1}(X,x_{0})^{wh}}{H}$ is discrete or equivalently, $H$ is an open subgroup of $\pi_{1}^{wh}(X,x_{0})$. On the other hand, using Proposition 3.5, $H$ is an open subgroup of $\pi_{1}^{qtop}(X,x_{0})$ which implies that $p$ is a semicovering map.
\end{proof}

It turns out that any semilocally simply connected space is a SLT space and any SLT space is an $H$-SLT space for every subgroup $H$ of $\pi_{1}(X,x_{0})$. Also, note that any space $X$ is a $\pi_{1}(X,x_{0})$-SLT space. In the following, we give an example of an $H$-SLT space which is not an SLT space, accordingly, it is not semilocally simply connected, where $H\neq \pi_{1}(X,x_{0})$.

\begin{example}
Let $(S^{1},0)$ be a unit circle, $(HA,x)$ be the Harmonic Archipelago, where $x$ is the common point of boundary circles. We consider the wedge space of $X=\frac{S^{1} \sqcup HA}{x\sim 0}$. In \cite[Example 4.4]{Torabi} it is shown that $\pi_{1}(X,x_{0})\neq \pi_{1}^{sg}(X,x_{0})$. On the other hand, $X$ is a semilocally small generated space \cite{Torabi}. Accordingly, $\pi_{1}^{sg}(X,x_{0})$, introduced by Virk \cite{Virk}, is an open subgroup of $\pi_{1}^{qtop}(X,x_{0})$. Using Proposition 3.4, we conclude that $X$ is a $\pi_{1}^{sg}(X,x_{0})$-SLT space. It is not hard to show that $X$ is not an SLT space. To prove that $X$ is not an SLT space, consider an arbitrary path (for example $\alpha$) in $X$ inside of $HA$ from any semilocally simply connected point to the wedge point.
\end{example}

\begin{remark}
Note that since the topology of $\pi_{1}^{\tau}(X,x)$ is coarser than the one of $\pi_{1}^{qtop}(X,x)$, in fact $\pi_{1}^{\tau}(X,x)$ and $\pi_{1}^{qtop}(X,x)$ have the same open subgroups \cite[Proposition 3.16]{BrazT}, it is routine to check that all the results of this section hold if we replace $\pi_{1}^{qtop}(X,x)$ with $\pi_{1}^{\tau}(X,x)$.
\end{remark}











\section*{Acknowledgments}
This research was supported by a grant from Ferdowsi University of Mashhad-Graduate Studies (No. 38529).

\section*{Reference}

\bibliography{mybibfile}



\end{document}